\newtheorem{theorem}{Theorem}
\theoremstyle{plain}
\newtheorem{definition}{Definition}
\newtheorem{proposition}{Proposition}
\newtheorem{remark}{Remark}
\numberwithin{equation}{section}
\begin{document}
\title{Fuzzy soft seperation axioms with sense of Ganguly and Saha}
\author{S. ATMACA}
\address{Cumhuriyet University, Faculty of Science, Department of
Mathematics, 58140 Sivas / Turkey}
\email{seatmaca@cumhuriyet.edu.tr}

\begin{abstract}
Tanay and Kandemir \cite{TK} introduced the topological structure of fuzzy
soft sets. In 2013, Manatha and Das \cite{md} defined seperation axioms on
fuzzy soft topological spaces. In this paper, we generalized form of the
seperation axioms.using fuzzy soft quasi-coincidence with sense of Ganguly
and Saha \cite{GS}. By using this notions, we also give some basic theorems
of seperation axioms in classical topological spaces.
\end{abstract}

\keywords{Fuzzy soft set, fuzzy soft topological spaces, fuzzy soft
seperation axioms}
\maketitle

\section{\protect\bigskip Introduction}

Most problems in engineering, medical scinece, economics and environment
etc. have uncertienties. Several set theories have been given in order to
mathematically model these uncertainties. Soft sets and fuzzy sets are
leading two of these theories.

Recently, different theories combined for modelling the problems, inculude
these uncertainties, more efficiently. One of these is the fuzzy soft sets
which is combination of fuzzy sets and soft sets and is defined by Maji et
al. \cite{MB3}. This theory, is interested in researchers in a short time,
has been applied several directions, such as topology \cite{Az, Bh, TK, Ts,
rs}, algebraic structures \cite{AA, io} and especially decision making \cite%
{fj, zk, arr, x2}.

In 2011, Tanay and Kandemir \cite{TK} defined the topological structure of
fuzzy soft sets. To improve this concept many researchers studied on this
field. Mahanta and Das \cite{md} mentioned seperation axioms in fuzzy soft
topological spaces.

In the present paper, we defined new seperation axioms, which are more
general than seperation axioms of Mahanta and Das, by the sense of Ganguly
and Saha \cite{GS} in fuzzy soft topological spaces. Using these
definitions, we gave the theorems which are important for seperation axioms
and taking place in classical topological spaces.

\section{Preliminaries}

Throughout this paper $X$ denotes initial universe, $E$ denotes the set of
all possible parameters which are attributes, characteristic or properties
of the objects in $X$, and the set of all subsets of $X$ will be denoted by $%
P(X)$.

\begin{definition}
\cite{Z} A fuzzy set $A$ of a non-empty set $X$ is characterized by a
membership function $%
\mu
_{A}:X\rightarrow \lbrack 0,1]$ whose value $%
\mu
_{A}(x)$ represents the "grade of membership" of $x$ in $A$ for $x\in X$.
\end{definition}

Let $I^{X}$ denotes the family of all fuzzy sets on $X$. If $A,B\in I^{X}$,
then some basic set operations for fuzzy sets are given by Zadeh as follows:

(1) $A\leq B\Leftrightarrow 
\mu
_{A}(x)\leq 
\mu
_{B}(x)$, for all $x\in X$.

(2) $A=B\Leftrightarrow 
\mu
_{A}(x)=%
\mu
_{B}(x)$, for all $x\in X$.

(3) $C=A\vee B\Leftrightarrow 
\mu
_{C}(x)=%
\mu
_{A}(x)\vee 
\mu
_{B}(x)$, for all $x\in X$.

(4) $D=A\wedge B\Leftrightarrow 
\mu
_{D}(x)=%
\mu
_{A}(x)\wedge 
\mu
_{B}(x)$, for all $x\in X$.

(5) $E=A^{C}\Leftrightarrow 
\mu
_{E}(x)=1-%
\mu
_{A}(x)$, for all $x\in X$.

A fuzzy point in $X$, whose value is $\alpha $ $(0<\alpha \leq 1)$ at the
support $x\in X$, is denoted by $x_{\alpha }$ \cite{Z}. A fuzzy point $%
x_{\alpha }\in A$, where $A$ is a fuzzy set in $X$ iff $\alpha \leq \mu
_{A}(x)$ \cite{Z}. The class all fuzzy points will be denoted by $S(X)$.

\begin{definition}
\cite{PM} For two fuzzy sets $A$ and $B$ in $X$, we write $AqB$ to mean that 
$A$ is quasi-coincident with $B$, i.e., there exists at least one point $%
x\in X$ such that $%
\mu
_{A}(x)+%
\mu
_{B}(x)>1$. If $A$ is not quasi-coincident with $B$, then we write $A%
\overline{q}B$.
\end{definition}

\begin{definition}
\cite{M} Let $X$ be the initial universe set and $E$ be the set of
parameters. A pair $(F,A)$ is called a soft set over $X$ where $F$ is a
mapping given by $F:A\longrightarrow P(X)$ and $A\subseteq E$.
\end{definition}

In the other words, the soft set is a parametrized family of subsets of the
set $X$. Every set $F(e)$, for every $e\in A$, from this family may be
considered as the set of $e$-elements of the soft set $(F,A)$.

\begin{definition}
\cite{MB3}Let $A\subseteq E$. A pair $(f,A)$ is called a fuzzy soft set over 
$X$ if $f:A\longrightarrow I^{X}$ is a function.
\end{definition}

We will use $FS(X,E)$ instead of the family of all fuzzy soft sets over $X$.

Roy and Samanta \cite{rs} did some modifications in above definition
analogously ideas made for soft sets.

\begin{definition}
\cite{rs}Let $A\subseteq E$. A fuzzy soft set $f_{A}$ over universe $X$ is
mapping from the parameter set $E$ to $I^{X}$, i.e., $f_{A}:E\longrightarrow
I^{X}$, where $f_{A}(e)\neq 0_{X}$ if $e\in A\subset E$ and $f_{A}(e)=0_{X}$
if $e\notin A$, where $0_{X}$ denotes empty fuzzy set on $X$.
\end{definition}

\begin{definition}
\cite{rs} The fuzzy soft set $f_{\emptyset }\in FS(X,E)$ is called null
fuzzy soft set, denoted by $\widetilde{0}_{E}$, if for all $e\in E$, $%
f_{\emptyset }(e)=0_{X}$.
\end{definition}

\begin{definition}
\cite{rs}Let $f_{E}\in FS(X,E)$. The fuzzy soft set $f_{E}$ is called
universal fuzzy soft set, denoted by $\widetilde{1}_{E}$, if for all $e\in E$%
, $f_{E}(e)=1_{X}$ where $1_{X}(x)=1$ for all $x\in X$.
\end{definition}

\begin{definition}
\cite{rs}Let $\ f_{A},g_{B}\in FS(X,E)$. $f_{A}$ is called a fuzzy soft
subset of $g_{B}$ if $f_{A}(e)\leq g_{B}(e)$ for every $e\in E$ and we write 
$f_{A}\sqsubseteq g_{B}$.
\end{definition}

\begin{definition}
\cite{rs}Let $f_{A},g_{B}\in FS(X,E)$. $f_{A}$ and $g_{B}$ are said to be
equal, denoted by $f_{A}=g_{B}$ if $f_{A}\sqsubseteq g_{B}$ and $%
g_{B}\sqsubseteq f_{A}$.
\end{definition}

\begin{definition}
\cite{rs}Let $f_{A},g_{B}\in FS(X,E)$. Then the union of $f_{A}$ and $g_{B}$
is also a fuzzy soft set $h_{C}$, defined by $h_{C}(e)=f_{A}(e)\vee g_{B}(e)$
for all $e\in E$, where $C=A\cup B$. Here we write $h_{C}=f_{A}\sqcup g_{B}$.
\end{definition}

\begin{definition}
\cite{rs}Let $f_{A},g_{B}\in FS(X,E)$. Then the intersection of $f_{A}$ and $%
g_{B}$ is also a fuzzy soft set $h_{C}$, defined by $h_{C}(e)=f_{A}(e)\wedge
g_{B}(e)$ for all $e\in E$, where $C=A\cap B$. Here we write $%
h_{C}=f_{A}\sqcap g_{B}$.
\end{definition}

\begin{definition}
\cite{TK}Let $f_{A}\in FS(X,E)$. The complement of $f_{A}$, denoted by $%
f_{A}^{c}$, is a fuzzy soft set defined by $f_{A}^{c}(e)=1-f_{A}(e)$ for
every $e\in E$.

Let us call $f_{A}^{c}$ to be fuzzy soft complement function of $f_{A}$.
Clearly $(f_{A}^{c})^{c}=f_{A}$, $(\widetilde{1}_{E})^{c}=\widetilde{0}_{E}$
and $(\widetilde{0}_{E})^{c}=\widetilde{1}_{E}$.
\end{definition}

\begin{definition}
\cite{KA1}Let $FS(X,E)$ and $FS(Y,K)$ be the families of all fuzzy soft sets
over $X$ and $Y$, respectively. Let $u:X\rightarrow Y$ and $p:E\rightarrow K$
be two functions. Then $f_{up}$ is called a fuzzy soft mapping from $X$ to $%
Y $ and denoted by $f_{up}:FS(X,E)\rightarrow FS(Y,K)$.

(1) Let $f_{A}\in FS(X,E)$, then the image of $f_{A}$ under the fuzzy soft
mapping $f_{up}$ is the fuzzy soft set over $Y$ defined by $f_{up}(f_{A})$,
where

$f_{up}(f_{A})(k)(y)=\left\{ 
\begin{array}{cc}
{\Large \vee }_{x\in u^{-1}(y)}({\Large \vee }_{e\in p^{-1}(k)\cap
A}f_{A}(e))(x) & \text{if }u^{-1}(y)\neq \varnothing \text{, }p^{-1}(k)\cap
A\neq \varnothing \text{;} \\ 
0_{Y} & \text{otherwise.}%
\end{array}%
\right. $

(2) Let $g_{B}\in FS(Y,K)$, then the preimage of $g_{B}$ under the fuzzy
soft mapping $f_{up}$ is the fuzzy soft set over $X$ defined by $%
f_{up}^{-1}(g_{B})$, where

$f_{up}^{-1}(g_{B})(e)(x)=\left\{ 
\begin{array}{cc}
g_{B}(p(e))(u(x)) & \text{for }p(e)\in B\text{;} \\ 
0_{X} & \text{otherwise.}%
\end{array}%
\right. $
\end{definition}

If $u$ and $p$ are injective then the fuzzy soft mapping $f_{up}$ is said to
be injective. If $u$ and $p$ are surjective then the fuzzy soft mapping $%
f_{up}$ is said to be surjective. The fuzzy soft mapping $f_{up}$ is called
constant, if $u$ and $p$ are constant.

\begin{theorem}
\cite{KA1}Let $f_{A},(f_{A_{i}})\in FS(X,E)$ and $g_{B},(g_{B_{i}})\in
FS(Y,K)$, $i\in J$, where $J$ is an index set.

(1) If $(f_{A_{1}})\sqsubseteq (f_{A_{2}})$, then $f_{up}(f_{A_{1}})%
\sqsubseteq f_{up}(f_{A_{2}})$.

(2) If $(g_{B_{1}})\sqsubseteq (g_{B_{2}})$, then $f_{up}^{-1}(g_{B_{1}})%
\sqsubseteq f_{up}^{-1}(g_{B_{2}})$.
\end{theorem}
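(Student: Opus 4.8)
The plan is to reduce both assertions to the pointwise order on $[0,1]$ and then invoke nothing more than the monotonicity of the supremum operation $\vee$. Recall that $f_{A_1}\sqsubseteq f_{A_2}$ means $f_{A_1}(e)\le f_{A_2}(e)$ in $I^{X}$ for every $e\in E$, i.e. $f_{A_1}(e)(x)\le f_{A_2}(e)(x)$ for all $e\in E$, $x\in X$; moreover this forces $A_1\subseteq A_2$, since $e\in A_1$ gives $f_{A_1}(e)\neq 0_{X}$, hence $f_{A_2}(e)\neq 0_{X}$, hence $e\in A_2$. The same remarks apply verbatim on the $Y$-side for $g_{B_1}\sqsubseteq g_{B_2}$, so $B_1\subseteq B_2$.

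I would dispose of part (2) first, as it is immediate. Fix $e\in E$ and $x\in X$. If $p(e)\in B_1$, then $p(e)\in B_2$ as well, and by the definition of the preimage, $f_{up}^{-1}(g_{B_1})(e)(x)=g_{B_1}(p(e))(u(x))\le g_{B_2}(p(e))(u(x))=f_{up}^{-1}(g_{B_2})(e)(x)$. If $p(e)\notin B_1$, then the left-hand side is $0$ and the inequality is trivial. Since $e$ and $x$ were arbitrary, $f_{up}^{-1}(g_{B_1})\sqsubseteq f_{up}^{-1}(g_{B_2})$. For part (1), fix $k\in K$ and $y\in Y$. If $u^{-1}(y)=\varnothing$, or if $p^{-1}(k)\cap A_2=\varnothing$ (which by $A_1\subseteq A_2$ also kills $p^{-1}(k)\cap A_1$), then both images are $0_{Y}$ at $(k,y)$. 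If $p^{-1}(k)\cap A_1=\varnothing$ while $p^{-1}(k)\cap A_2\neq\varnothing$, the left value is $0$ and the inequality holds. In the remaining case all the relevant index sets are nonempty, and
\[
\bigvee_{x\in u^{-1}(y)}\bigvee_{e\in p^{-1}(k)\cap A_1} f_{A_1}(e)(x)\ \le\ \bigvee_{x\in u^{-1}(y)}\bigvee_{e\in p^{-1}(k)\cap A_2} f_{A_2}(e)(x),
\]
because each term increases ($f_{A_1}(e)(x)\le f_{A_2}(e)(x)$) and the inner index set enlarges ($A_1\subseteq A_2$). The two sides are exactly $f_{up}(f_{A_1})(k)(y)$ and $f_{up}(f_{A_2})(k)(y)$, so $f_{up}(f_{A_1})\sqsubseteq f_{up}(f_{A_2})$.

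I do not expect a genuine obstacle: the only delicate point is the bookkeeping of the supports $A_i,B_i$ together with the empty-intersection branches in the piecewise definitions of $f_{up}$ and $f_{up}^{-1}$. This can be streamlined by adopting the convention that a supremum over the empty set equals $0$, under which $f_{up}(f_{A})(k)(y)=\bigvee_{x\in u^{-1}(y)}\bigvee_{e\in p^{-1}(k)}f_{A}(e)(x)$ uniformly (the terms with $e\notin A$ contribute $0$), and the monotonicity in each part then collapses to a single application of monotonicity of $\vee$.
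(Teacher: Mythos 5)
Your argument is correct. Note, however, that the paper itself gives no proof of this statement: it is quoted from the cited reference (Kharal and Ahmad) and used as a black box, so there is nothing in the source to compare your route against. What you supply is the standard direct verification: unwind $\sqsubseteq$ to the pointwise order on $[0,1]$, observe that the Roy--Samanta convention ($f_{A}(e)\neq 0_{X}$ iff $e\in A$) forces $A_{1}\subseteq A_{2}$ and $B_{1}\subseteq B_{2}$, and then part (2) is a single substitution into the preimage formula while part (1) is monotonicity of suprema in both the terms and the index set. Your case bookkeeping for the $0_{Y}$ and $0_{X}$ branches is complete, and the closing remark that the empty-supremum convention collapses the piecewise definition into one formula is a legitimate simplification consistent with the definitions in the paper. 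No gaps.
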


\begin{definition}
\cite{md}A fuzzy soft set $g_{A}$ denoted by $e_{g_{A}}$, if for the element 
$e\in A$, $g(e)\neq 0_{X},$ and $g(e^{\prime })=0_{X}$, $\forall e^{\prime
}\in A-\{e\}$.
\end{definition}

\begin{definition}
\cite{Az}The fuzzy soft set $f_{A}\in FS(X,E)$ is called fuzzy soft point if 
$A=\{e\}\subseteq E$ and $f_{A}(e)$ is a fuzzy point in $X$ i.e. there
exists $x\in X$ such that $f_{A}(e)(x)=\alpha $ ($0<\alpha \leq 1$) and $%
f_{A}(e)(y)=0$\ for all $y\in X-\{x\}$. We denote this fuzzy soft point $%
f_{A}=e_{x}^{\alpha }=\{(e,x_{\alpha })\}$.
\end{definition}

\begin{definition}
\cite{Az}Let $e_{x}^{\alpha }$, $f_{A}\in FS(X,E)$. We say that $%
e_{x}^{\alpha }\widetilde{\in }f_{A}$ read as $e_{x}^{\alpha }$ belongs to
the fuzzy soft set $f_{A}$ if for the element $e\in A$, $\alpha \leq
f_{A}(e)(x)$.
\end{definition}

Evidently, every fuzzy soft set $f_{A}$ can be expresssed as the union of
all the fuzzy soft points which belong to $f_{A}$.

\begin{definition}
\cite{Az}Let $f_{A}$, $g_{B}\in FS(X,E)$. $f_{A}$ is said to be soft
quasi-coincident with $g_{B}$, denoted by $f_{A}qg_{B}$, if there exist $%
e\in E$ and $x\in X$ such that $f_{A}(e)(x)+g_{B}(e)(x)>1$.
\end{definition}

If $f_{A}$ is not soft quasi-coincident with $g_{B}$, then we write $f_{A}%
\overline{q}g_{B}$.

\begin{proposition}
\cite{Az}\label{P-c}Let $f_{A}$, $g_{B}\in FS(X,E)$, Then the followings are
true.

(1) $f_{A}\sqsubseteq g_{B}\Leftrightarrow f_{A}\overline{q}g_{B}^{c}$.

(2) $f_{A}qg_{B}\Rightarrow f_{A}\sqcap g_{B}\neq \widetilde{0}_{E}$

(3) $f_{A}\overline{q}f_{A}^{c}$.

(4) $f_{A}qg_{B}\Leftrightarrow $there exists an $e_{x}^{\alpha }\widetilde{%
\in }f_{A}$ such that $e_{x}^{\alpha }qg_{B}$.

(5) $e_{x}^{\alpha }\overline{q}f_{A}\Leftrightarrow e_{x}^{\alpha }%
\widetilde{\in }f_{A}^{c}$.

(6) $f_{A}\sqsubseteq g_{B}\Leftrightarrow $ If $e_{x}^{\alpha }qf_{A}$,
then $e_{x}^{\alpha }qg_{B}$ for all $e_{x}^{\alpha }\in FS(X,E)$.
\end{proposition}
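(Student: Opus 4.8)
The plan is to prove all six assertions by unwinding the definitions of the fuzzy soft order $\sqsubseteq$, the complement $f_{A}^{c}(e)(x)=1-f_{A}(e)(x)$, and quasi-coincidence ($f_{A}qg_{B}$ iff there are $e\in E$, $x\in X$ with $f_{A}(e)(x)+g_{B}(e)(x)>1$), together with the single structural fact that a fuzzy soft point $e_{x}^{\alpha }$ takes the value $\alpha$ at $(e,x)$ and $0$ everywhere else. In each case I reduce the statement to an elementary inequality in $[0,1]$.

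First I would handle (1): $f_{A}\overline{q}g_{B}^{c}$ says $f_{A}(e)(x)+(1-g_{B}(e)(x))\le 1$ for every $e$ and $x$, which rearranges to $f_{A}(e)(x)\le g_{B}(e)(x)$ for every $e$ and $x$, i.e. $f_{A}\sqsubseteq g_{B}$; every step is reversible. Part (3) is then the instance $g_{B}=f_{A}$, where $f_{A}(e)(x)+f_{A}^{c}(e)(x)=1$ identically, so the strict inequality defining $q$ never occurs. For (2), from $f_{A}(e)(x)+g_{B}(e)(x)>1$ and $g_{B}(e)(x)\le 1$ I get $f_{A}(e)(x)>1-g_{B}(e)(x)\ge 0$ and, symmetrically, $g_{B}(e)(x)>0$, so $(f_{A}\sqcap g_{B})(e)(x)=f_{A}(e)(x)\wedge g_{B}(e)(x)>0$, whence $f_{A}\sqcap g_{B}\neq \widetilde{0}_{E}$.

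Next, for (5), since $e_{x}^{\alpha }$ is supported only at $(e,x)$, the condition $e_{x}^{\alpha }\overline{q}f_{A}$ collapses to $\alpha +f_{A}(e)(x)\le 1$, i.e. $\alpha \le 1-f_{A}(e)(x)=f_{A}^{c}(e)(x)$, which is exactly $e_{x}^{\alpha }\widetilde{\in }f_{A}^{c}$. For (4), the implication $\Leftarrow$ is immediate: if $e_{x}^{\alpha }\widetilde{\in }f_{A}$ and $e_{x}^{\alpha }qg_{B}$ then $\alpha \le f_{A}(e)(x)$ and $\alpha +g_{B}(e)(x)>1$, so $f_{A}(e)(x)+g_{B}(e)(x)>1$; for $\Rightarrow$, I pick $e,x$ witnessing $f_{A}qg_{B}$, set $\alpha:=f_{A}(e)(x)$ (positive because the sum exceeds $1$ and $g_{B}(e)(x)\le 1$), and check directly that $e_{x}^{\alpha }\widetilde{\in }f_{A}$ and $e_{x}^{\alpha }qg_{B}$. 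For (6), the direction $\Rightarrow$ follows by monotonicity: $f_{A}\sqsubseteq g_{B}$ and $e_{x}^{\alpha }qf_{A}$ give $\alpha +g_{B}(e)(x)\ge \alpha +f_{A}(e)(x)>1$. The one place needing a little care — and the only real obstacle in the whole proposition — is the converse of (6), which I argue by contraposition: if $f_{A}\not\sqsubseteq g_{B}$ then there are $e,x$ with $g_{B}(e)(x)<f_{A}(e)(x)$, so the interval $\bigl(1-f_{A}(e)(x),\,1-g_{B}(e)(x)\bigr]$ is nonempty and contained in $(0,1]$; choosing $\alpha$ in it yields a genuine fuzzy soft point $e_{x}^{\alpha }$ with $e_{x}^{\alpha }qf_{A}$ but $e_{x}^{\alpha }\overline{q}g_{B}$, contradicting the hypothesis. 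I anticipate no difficulty beyond verifying that this $\alpha$ indeed lies in $(0,1]$, so that $e_{x}^{\alpha }$ is a legitimate fuzzy soft point.
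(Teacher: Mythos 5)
Your proof is correct: each of the six parts reduces, exactly as you say, to an elementary inequality in $[0,1]$ once the definitions of $\sqsubseteq$, complement, quasi-coincidence, and fuzzy soft point are unwound, and your choice of $\alpha$ in parts (4) and (6) is legitimately in $(0,1]$. Note that the paper itself states this proposition without proof (it is quoted from the reference [Az]), so there is no argument to compare against; your definitional verification is the standard one and is complete.
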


\begin{theorem}
\label{q1}Let $f_{A}$, $g_{B}\in FS(Y,K)$, $f_{up}:FS(X,E)\rightarrow
FS(Y,K) $ be fuzzy soft mapping and $f_{A}$ is not quasi coincident with $%
g_{B}$. Then $f_{up}^{-1}(f_{A})$ is not quasi coincident with $%
f_{up}^{-1}(g_{B})$.
\end{theorem}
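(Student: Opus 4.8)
The plan is to argue by contradiction, unwinding the definition of the preimage $f_{up}^{-1}$ (Definition of \cite{KA1}, part (2)) and the definition of fuzzy soft quasi-coincidence (Definition of \cite{Az}). So I would begin by assuming, contrary to the assertion, that $f_{up}^{-1}(f_{A})\,q\,f_{up}^{-1}(g_{B})$. By definition this produces some $e\in E$ and some $x\in X$ with
\[
f_{up}^{-1}(f_{A})(e)(x)+f_{up}^{-1}(g_{B})(e)(x)>1 .
\]

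The next step is a short case analysis driven by the piecewise formula for $f_{up}^{-1}$. Every value of a fuzzy set lies in $[0,1]$, so each of the two summands above is at most $1$; consequently, for their sum to exceed $1$, neither summand can be the trivial value $0_{X}$. But $f_{up}^{-1}(f_{A})(e)(x)=0_{X}$ whenever $p(e)\notin A$, and $f_{up}^{-1}(g_{B})(e)(x)=0_{X}$ whenever $p(e)\notin B$. Hence we are forced to have $p(e)\in A$ and $p(e)\in B$ simultaneously, and in that situation both preimages take their nontrivial branch, namely $f_{up}^{-1}(f_{A})(e)(x)=f_{A}(p(e))(u(x))$ and $f_{up}^{-1}(g_{B})(e)(x)=g_{B}(p(e))(u(x))$.

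Finally, putting $k:=p(e)\in K$ and $y:=u(x)\in Y$, the displayed inequality becomes $f_{A}(k)(y)+g_{B}(k)(y)>1$, which is exactly the statement that $f_{A}\,q\,g_{B}$, contradicting the hypothesis $f_{A}\,\overline{q}\,g_{B}$. Therefore no such $e$ and $x$ exist, i.e. $f_{up}^{-1}(f_{A})\,\overline{q}\,f_{up}^{-1}(g_{B})$, as required. (Equivalently, one can phrase the same computation directly: for arbitrary $e\in E$ and $x\in X$, if $p(e)\notin A$ or $p(e)\notin B$ then one summand is $0_{X}$ and the sum is $\le 1$; otherwise the sum equals $f_{A}(p(e))(u(x))+g_{B}(p(e))(u(x))\le 1$ by $f_{A}\,\overline{q}\,g_{B}$.)

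The argument is essentially bookkeeping with the piecewise definition of the preimage, so there is no serious obstacle; the only point that needs a moment's care is the observation that a summand equal to $0_{X}$ cannot be compensated by the other summand, since membership grades never exceed $1$ — this is what eliminates the degenerate cases $p(e)\notin A$ and $p(e)\notin B$ and reduces everything to the single clean case where the hypothesis applies verbatim.
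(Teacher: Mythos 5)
Your proposal is correct and is essentially the paper's own argument: the paper runs the same computation directly (from $f_{A}\overline{q}g_{B}$ it deduces $f_{A}(p(e))(u(x))+g_{B}(p(e))(u(x))\leq 1$ for all $e,x$ and identifies these values with the preimages), while you phrase it as a contradiction and add the minor but worthwhile observation that the degenerate branches $p(e)\notin A$ or $p(e)\notin B$ of the piecewise definition only make the sum smaller. No substantive difference in method.
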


\begin{proof}
\begin{equation*}
\begin{array}{ll}
f_{A}\overline{q}g_{B} & \Longrightarrow \text{ For all }k\in K\text{ and }%
y\in Y\text{; }f_{A}(k)(y)+g_{B}(k)(y)\leq 1\text{ } \\ 
& \Longrightarrow \text{ For all }e\in E\text{ and }x\in X\text{; }%
f_{A}(p(e))(u(x))+g_{B}(p(e))(u(x))\leq 1\text{ } \\ 
& \Longrightarrow \text{ For all }e\in E\text{ and }x\in X\text{; }%
f_{up}^{-1}(f_{A})(e)(x)+f_{up}^{-1}(g_{B})(e)(x)\leq 1 \\ 
& \Longrightarrow \text{ }f_{up}^{-1}(f_{A})\overline{q}f_{up}^{-1}(g_{B}).%
\end{array}%
\end{equation*}
\end{proof}

\begin{definition}
(see \cite{TK, rs}) A fuzzy soft topological space is a pair $(X,{\Large %
\tau })$ where $X$ is a nonempty set and ${\Large \tau }$ is a family of
fuzzy soft sets over $X$ satisfying the following properties:

(1) $\widetilde{0}_{E},\widetilde{1}_{E}\in {\Large \tau }$

(2) If $f_{A}$, $g_{B}\in {\Large \tau }$ , then $f_{A}\sqcap g_{B}\in 
{\Large \tau }$

(3) If $f_{A}{}_{i}\in {\Large \tau }$, $\forall i\in J$, then ${\LARGE %
\sqcup }_{i\in J}f_{A}{}_{i}\in {\Large \tau }$.\newline
Then ${\Large \tau }$ is called a topology of fuzzy soft sets on $X$. Every
member of ${\Large \tau }$ is called fuzzy soft open. $g_{B}$ is called
fuzzy soft closed in $(X,{\Large \tau })$ if $(g_{B})^{c}\in {\Large \tau }$
.
\end{definition}

\begin{definition}
\cite{Az}A fuzzy soft set $f_{A}$ in $FS(X,E)$ is called Q-neighborhood
(briefly, Q-nbd) of $g_{B}$ if and only if there exists a fuzzy soft open
set $h_{C}$ in ${\Large \tau }$ such that $g_{B}qh_{C}\sqsubseteq f_{A}$.
\end{definition}

\begin{theorem}
\cite{Az}\label{t-cl}Let $e_{x}^{\alpha }$, $f_{A}\in FS(X,E)$. Then $%
e_{x}^{\alpha }\widetilde{\in }\overline{f_{A}}$ if and only if each Q-nbd
of $e_{x}^{\alpha }$ is soft quasi-coincident with $f_{A}$.
\end{theorem}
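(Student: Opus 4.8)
The plan is to prove both implications by contraposition, using the description of the fuzzy soft closure $\overline{f_{A}}$ as the least fuzzy soft closed set containing $f_{A}$ (equivalently, the intersection $\sqcap \{g_{B}:g_{B}$ fuzzy soft closed, $f_{A}\sqsubseteq g_{B}\}$), together with parts (1) and (5) of Proposition \ref{P-c}. The only genuinely topological input will be that the complement of a fuzzy soft open set is fuzzy soft closed, and conversely; everything else is formal manipulation of $\sqsubseteq $, $q$ and $\overline{q}$, so this is really the fuzzy soft transcription of the classical closure criterion for fuzzy points in the sense of quasi-coincidence.

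For the ``only if'' direction I would assume the negation of the right-hand side and derive the negation of the left-hand side. So suppose $e_{x}^{\alpha }$ has a Q-nbd $h_{C}$ with $h_{C}\overline{q}f_{A}$. By the definition of Q-nbd there is a fuzzy soft open set $u_{D}$ with $e_{x}^{\alpha }qu_{D}\sqsubseteq h_{C}$. Since $u_{D}\sqsubseteq h_{C}$ and $h_{C}\overline{q}f_{A}$, the pointwise definition of quasi-coincidence immediately gives $u_{D}\overline{q}f_{A}$, hence $f_{A}\sqsubseteq u_{D}^{c}$ by Proposition \ref{P-c}(1). Now $u_{D}^{c}$ is fuzzy soft closed and contains $f_{A}$, so $\overline{f_{A}}\sqsubseteq u_{D}^{c}$. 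On the other hand $e_{x}^{\alpha }qu_{D}$ says precisely that $e_{x}^{\alpha }$ is not soft quasi-coincident with $u_{D}$, so by Proposition \ref{P-c}(5) $e_{x}^{\alpha }\widetilde{\notin }u_{D}^{c}$; combined with $\overline{f_{A}}\sqsubseteq u_{D}^{c}$ this forces $e_{x}^{\alpha }\widetilde{\notin }\overline{f_{A}}$, which is the contrapositive we wanted.

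For the ``if'' direction I would again contrapose: assume $e_{x}^{\alpha }\widetilde{\notin }\overline{f_{A}}$ and exhibit a Q-nbd of $e_{x}^{\alpha }$ that fails to be soft quasi-coincident with $f_{A}$. The natural candidate is $(\overline{f_{A}})^{c}$, which is fuzzy soft open because $\overline{f_{A}}$ is fuzzy soft closed. Applying Proposition \ref{P-c}(5) with $(\overline{f_{A}})^{c}$ in place of $f_{A}$ rewrites $e_{x}^{\alpha }\widetilde{\notin }\overline{f_{A}}=((\overline{f_{A}})^{c})^{c}$ as $e_{x}^{\alpha }q(\overline{f_{A}})^{c}$; taking the open set in the definition of Q-nbd to be $(\overline{f_{A}})^{c}$ itself (which trivially lies below itself) then shows that $(\overline{f_{A}})^{c}$ is a Q-nbd of $e_{x}^{\alpha }$. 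Finally, from $f_{A}\sqsubseteq \overline{f_{A}}$ and Proposition \ref{P-c}(1) we get $f_{A}\overline{q}(\overline{f_{A}})^{c}$, so this Q-nbd is not soft quasi-coincident with $f_{A}$, completing the argument.

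I do not anticipate a serious obstacle. The one place requiring care is applying Proposition \ref{P-c}(5) in the correct direction, i.e.\ keeping straight that ``$e_{x}^{\alpha }qu_{D}$'' is the negation of ``$e_{x}^{\alpha }\overline{q}u_{D}$'' and hence of ``$e_{x}^{\alpha }\widetilde{\in }u_{D}^{c}$'', and the small monotonicity remark that $u_{D}\sqsubseteq h_{C}$ with $h_{C}\overline{q}f_{A}$ yields $u_{D}\overline{q}f_{A}$. It is also worth stating explicitly at the outset, if it has not been recorded, that $\overline{f_{A}}$ is simultaneously fuzzy soft closed, a superset of $f_{A}$, and contained in every fuzzy soft closed superset of $f_{A}$, since all three facts are used above.
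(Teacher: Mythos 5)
Your argument is correct and is the standard one (the fuzzy soft transcription of the Pu--Liu closure criterion); note that the paper itself states this theorem as a citation from \cite{Az} and gives no proof, so there is nothing internal to compare against, but both directions of your contrapositive argument, the monotonicity step $u_{D}\sqsubseteq h_{C}$, $h_{C}\overline{q}f_{A}\Rightarrow u_{D}\overline{q}f_{A}$, and the uses of Proposition \ref{P-c}(1) and (5) all check out. One purely verbal slip: the clause ``$e_{x}^{\alpha }qu_{D}$ says precisely that $e_{x}^{\alpha }$ is \emph{not} soft quasi-coincident with $u_{D}$'' states the opposite of what you mean; it should say that $e_{x}^{\alpha }qu_{D}$ is the negation of $e_{x}^{\alpha }\overline{q}u_{D}$, hence of $e_{x}^{\alpha }\widetilde{\in }u_{D}^{c}$, which is exactly how you use it (and how you restate it in your closing paragraph), so the mathematics is unaffected.
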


\begin{definition}
\cite{TK}A fuzzy soft set $g_{B}$ in a fuzzy soft topological space $(X,%
{\Large \tau })$ is called a fuzzy soft neighborhood (briefly: nbd) of the
fuzzy soft set $f_{A}$ if there exists a fuzzy soft open set $h_{C}$ such
that $f_{A}\sqsubseteq h_{C}\sqsubseteq g_{B}$.
\end{definition}

\begin{definition}
\cite{Az}Let $(X,{\Large \tau }_{1})$ and $(Y,{\Large \tau }_{2})$ be two
fuzzy soft topological spaces. A fuzzy soft mapping $f_{up}:(X,{\Large \tau }%
_{1})\rightarrow (Y,{\Large \tau }_{2})$ is called fuzzy soft continuous if $%
f_{up}^{-1}(g_{B})\in {\Large \tau }_{1}$ for all $g_{B}\in {\Large \tau }%
_{2}$.
\end{definition}

\begin{theorem}
\cite{Az}\label{cn}Let $(X,{\Large \tau }_{1})$ and $(Y.{\Large \tau }_{2})$
be fuzzy soft topological spaces. For a function $f_{up}:FS(X,E)%
\longrightarrow FS(Y,K)$, the following statements are equivalent:

(a)$f_{up}$ is fuzzy soft continuous;

(b) for each fuzzy soft set $f_{A}$ in $FS(X,E)$, the inverse image of every
nbd of $f_{up}(f_{A})$ is a nbd of $f_{A}$;

(c) for each soft set $f_{A}$ in $FS(X,E)$ and each nbd $h_{C}$ of $%
f_{up}(f_{A})$, there is a nbd $g_{B}$ of $f_{A}$ such that$%
f_{up}(g_{B})\sqsubseteq h_{C}$;
\end{theorem}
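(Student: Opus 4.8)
The plan is to establish the cycle of implications $(a)\Rightarrow(b)\Rightarrow(c)\Rightarrow(a)$. Two elementary facts about the fuzzy soft mapping $f_{up}$ will be used repeatedly, and I would record them first, proving them straight from Definition~\cite{KA1}: for every $f_A\in FS(X,E)$ one has $f_A\sqsubseteq f_{up}^{-1}(f_{up}(f_A))$, and for every $g_B\in FS(Y,K)$ one has $f_{up}(f_{up}^{-1}(g_B))\sqsubseteq g_B$. Both are checked pointwise: the first because $x\in u^{-1}(u(x))$ and $e\in p^{-1}(p(e))\cap A$ exhibit $f_A(e)(x)$ as one of the terms in the supremum defining $f_{up}(f_A)(p(e))(u(x))$; the second because every term of the supremum defining $f_{up}(f_{up}^{-1}(g_B))(k)(y)$ equals $g_B(k)(y)$ (or is $0$). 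Besides these, I would use only the monotonicity of $f_{up}$ and $f_{up}^{-1}$ from the Theorem of~\cite{KA1} quoted above.

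For $(a)\Rightarrow(b)$: let $f_A\in FS(X,E)$ and let $g_B$ be a nbd of $f_{up}(f_A)$; pick a fuzzy soft open $h_C$ with $f_{up}(f_A)\sqsubseteq h_C\sqsubseteq g_B$. Applying $f_{up}^{-1}$ and monotonicity gives $f_{up}^{-1}(f_{up}(f_A))\sqsubseteq f_{up}^{-1}(h_C)\sqsubseteq f_{up}^{-1}(g_B)$; combining with $f_A\sqsubseteq f_{up}^{-1}(f_{up}(f_A))$ and with $f_{up}^{-1}(h_C)\in{\Large \tau}_1$ (by continuity), we obtain $f_A\sqsubseteq f_{up}^{-1}(h_C)\sqsubseteq f_{up}^{-1}(g_B)$ with $f_{up}^{-1}(h_C)$ open, i.e.\ $f_{up}^{-1}(g_B)$ is a nbd of $f_A$. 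For $(b)\Rightarrow(c)$: given $f_A$ and a nbd $h_C$ of $f_{up}(f_A)$, set $g_B:=f_{up}^{-1}(h_C)$; by $(b)$ it is a nbd of $f_A$, and $f_{up}(g_B)=f_{up}(f_{up}^{-1}(h_C))\sqsubseteq h_C$ by the second auxiliary fact, so $g_B$ is the required nbd.

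For $(c)\Rightarrow(a)$: let $h_C\in{\Large \tau}_2$ and put $f_A:=f_{up}^{-1}(h_C)$. Since $f_{up}(f_A)=f_{up}(f_{up}^{-1}(h_C))\sqsubseteq h_C$ and $h_C$ is open, $h_C$ is a nbd of $f_{up}(f_A)$. By $(c)$ there is a nbd $g_B$ of $f_A$ with $f_{up}(g_B)\sqsubseteq h_C$, and by definition of nbd a fuzzy soft open $w_D$ with $f_A\sqsubseteq w_D\sqsubseteq g_B$. Then $f_{up}(w_D)\sqsubseteq f_{up}(g_B)\sqsubseteq h_C$, whence $w_D\sqsubseteq f_{up}^{-1}(f_{up}(w_D))\sqsubseteq f_{up}^{-1}(h_C)=f_A$. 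Together with $f_A\sqsubseteq w_D$ this forces $f_A=w_D\in{\Large \tau}_1$, so $f_{up}$ is fuzzy soft continuous.

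The routine part is verifying the two auxiliary containments from the definition of the image and preimage. The one place where care is needed is the last step of $(c)\Rightarrow(a)$: the nbd $g_B$ produced by $(c)$ must be ``squeezed back'' through an open set $w_D$ so that $f_{up}^{-1}(f_{up}(w_D))$ gets sandwiched between $f_A$ and $f_A$ itself, which is exactly what upgrades the mere inclusion $f_A\sqsubseteq w_D$ to the equality making $f_A$ open. I expect no obstacle beyond bookkeeping with the supremum appearing in the definition of $f_{up}$.
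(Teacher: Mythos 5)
The paper only quotes Theorem \ref{cn} from \cite{Az} and gives no proof of its own, so there is nothing in-paper to compare against; judged on its own, your argument is correct. The cycle $(a)\Rightarrow(b)\Rightarrow(c)\Rightarrow(a)$ built on the two containments $f_{A}\sqsubseteq f_{up}^{-1}(f_{up}(f_{A}))$ and $f_{up}(f_{up}^{-1}(g_{B}))\sqsubseteq g_{B}$ (both of which do follow pointwise from the definition of image and preimage, with no injectivity or surjectivity needed in these directions) is the standard route, and the final ``squeeze'' in $(c)\Rightarrow(a)$, forcing $f_{A}=w_{D}$ and hence openness of $f_{up}^{-1}(h_{C})$, is handled correctly.
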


\begin{theorem}
\cite{Az}\label{cq}A mapping $f_{up}:(X,E)\rightarrow (Y,K)$ is continuous
if and only if corresponding fuzzy soft open $q$-nbd $g_{B}$ of $%
k_{y}^{\alpha }$ in $FS(Y,K)$ there exists a fuzzy soft open $q$-nbd $f_{A}$
of $e_{x}^{\alpha }$ in $FS(X,E)$ such that $f_{up}(f_{A})\sqsubseteq g_{B}$%
, where $f_{up}(e_{x}^{\alpha })=k_{y}^{\alpha }$
\end{theorem}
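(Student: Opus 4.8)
The plan is to prove the two implications separately, leaning on the $Q$-neighborhood machinery introduced above, on Proposition \ref{P-c}, and on the elementary image/preimage identities for a fuzzy soft mapping.

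\emph{Necessity.} Suppose $f_{up}$ is fuzzy soft continuous. Fix a fuzzy soft point $e_{x}^{\alpha }$ and set $k_{y}^{\alpha }=f_{up}(e_{x}^{\alpha })$; a one-line computation from the image formula shows this is again a fuzzy soft point, with $k=p(e)$ and $y=u(x)$. Let $g_{B}$ be a fuzzy soft open $q$-nbd of $k_{y}^{\alpha }$, so $g_{B}\in {\Large \tau }_{2}$ and $k_{y}^{\alpha }qg_{B}$. I would simply take $f_{A}:=f_{up}^{-1}(g_{B})$. By continuity $f_{A}\in {\Large \tau }_{1}$. Since $f_{up}^{-1}(g_{B})(e)(x)=g_{B}(p(e))(u(x))=g_{B}(k)(y)$ and $\alpha +g_{B}(k)(y)>1$, we get $e_{x}^{\alpha }qf_{A}$, so $f_{A}$ is a fuzzy soft open $q$-nbd of $e_{x}^{\alpha }$. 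Finally $f_{up}(f_{A})=f_{up}(f_{up}^{-1}(g_{B}))\sqsubseteq g_{B}$ is the standard image-of-preimage inclusion, immediate from the defining formulas of $f_{up}$ and $f_{up}^{-1}$. Hence $f_{A}$ is the required $q$-nbd.

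\emph{Sufficiency.} Conversely, assume the stated condition and let $g_{B}\in {\Large \tau }_{2}$; the goal is $f_{up}^{-1}(g_{B})\in {\Large \tau }_{1}$, which I will establish by exhibiting $f_{up}^{-1}(g_{B})$ as a union of fuzzy soft open sets. Let $e_{x}^{\alpha }$ be any fuzzy soft point with $e_{x}^{\alpha }qf_{up}^{-1}(g_{B})$. Then $\alpha +g_{B}(p(e))(u(x))>1$, i.e. $k_{y}^{\alpha }qg_{B}$ where $k_{y}^{\alpha }=f_{up}(e_{x}^{\alpha })$, so $g_{B}$ is a fuzzy soft open $q$-nbd of $f_{up}(e_{x}^{\alpha })$. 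By hypothesis there is a fuzzy soft open $q$-nbd $f_{A}^{\ast }$ of $e_{x}^{\alpha }$ with $f_{up}(f_{A}^{\ast })\sqsubseteq g_{B}$; applying $f_{up}^{-1}$ and using its monotonicity \cite{KA1} together with the inclusion $f_{A}^{\ast }\sqsubseteq f_{up}^{-1}(f_{up}(f_{A}^{\ast }))$ yields $f_{A}^{\ast }\sqsubseteq f_{up}^{-1}(g_{B})$. Let $h$ be the union of all such open $f_{A}^{\ast }$, one chosen for each fuzzy soft point $q$-coincident with $f_{up}^{-1}(g_{B})$. Then $h\sqsubseteq f_{up}^{-1}(g_{B})$, and for every $e_{x}^{\alpha }qf_{up}^{-1}(g_{B})$ we have $e_{x}^{\alpha }qf_{A}^{\ast }\sqsubseteq h$, hence $e_{x}^{\alpha }qh$. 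By Proposition \ref{P-c}(6) this forces $f_{up}^{-1}(g_{B})\sqsubseteq h$, so $f_{up}^{-1}(g_{B})=h\in {\Large \tau }_{1}$ as a union of fuzzy soft open sets.

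\emph{Main obstacle.} The step requiring the most care is the passage from "every fuzzy soft point $q$-coincident with $f_{up}^{-1}(g_{B})$ has a fuzzy soft open $q$-nbd contained in $f_{up}^{-1}(g_{B})$" to "$f_{up}^{-1}(g_{B})$ is fuzzy soft open": this is exactly where the quasi-coincidence characterization of $\sqsubseteq$ in Proposition \ref{P-c}(6) must be used, and it is the reason the theorem is phrased with fuzzy soft open $q$-nbds rather than with ordinary nbds as in Theorem \ref{cn}. The remaining ingredients — the identity $f_{up}(e_{x}^{\alpha })=k_{y}^{\alpha }$, the inclusions $f_{up}(f_{up}^{-1}(g_{B}))\sqsubseteq g_{B}$ and $f_{A}^{\ast }\sqsubseteq f_{up}^{-1}(f_{up}(f_{A}^{\ast }))$, and the monotonicity of $f_{up}^{-1}$ — are routine unwindings of the definitions recalled in the Preliminaries.
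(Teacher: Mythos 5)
This is Theorem \ref{cq}, which the paper only quotes from \cite{Az} and does not prove, so there is no in-paper argument to compare against; judged on its own, your proof is correct and is the standard one (the direct transcription of the Pu--Liu/Ganguly--Saha continuity criterion to the fuzzy soft setting). Necessity via $f_{A}=f_{up}^{-1}(g_{B})$ and sufficiency via writing $f_{up}^{-1}(g_{B})$ as the union of the chosen open $q$-nbds, closed off with Proposition \ref{P-c}(6), both go through; the only point worth making explicit is that in each direction the quasi-coincidence $\alpha+g_{B}(p(e))(u(x))>1$ forces $g_{B}(p(e))\neq 0_{X}$, hence $p(e)\in B$, so the preimage formula really does return $g_{B}(p(e))(u(x))$ rather than the ``otherwise'' value $0_{X}$.
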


\section{quasi seperation axioms}

\begin{definition}
\cite{md}A fuzzy soft topological space $(X,\tau )$ is said to be a fuzzy
soft $T_{0}$- space if for every pair of disjoint fuzzy soft sets $e_{h_{A}}$%
, $e_{g_{B}}$, $\exists $ a fuzzy soft open set containing one but not the
other.
\end{definition}

\begin{definition}
\cite{md}A fuzzy soft topological space $(X,\tau )$ is said to be a fuzzy
soft $T_{1}$- space if for distinct pair of fuzzy soft sets $e_{g_{A}}$, $%
e_{k_{A}}$ of $f_{A}$, $\exists $ fuzzy soft open sets $s_{A}$ and $h_{A}$
such that $e_{g_{A}}\in s_{A}$ and $e_{g_{A}}\notin h_{A}$;$e_{k_{A}}\in
h_{A}$ and $e_{k_{A}}\notin s_{A}$.
\end{definition}

\begin{definition}
\cite{md}A fuzzy soft topological space $(X,\tau )$ is said to be a fuzzy
soft $T_{2}$- space if for distinct pair of fuzzy soft sets $e_{g_{A}}$, $%
e_{k_{A}}$ of $f_{A}$, $\exists $ disjoint fuzzy soft open sets $s_{A}$ and $%
h_{A}$ such that $e_{g_{A}}\in s_{A}$ and $e_{g_{A}}\notin h_{A}$;$%
e_{k_{A}}\in h_{A}$ and $e_{k_{A}}\notin s_{A}$.
\end{definition}

\begin{definition}
\cite{md}A fuzzy soft topological space $(X,\tau )$ is said to be a fuzzy
soft regular space if for every fuzzy soft set $e_{h_{A}}$ and fuzzy soft
closed set $k_{A}$ not containing $e_{h_{A}}$, $\exists $ disjoint fuzzy
soft open sets $g_{1_{A}}$, $g_{2_{A}}$ such that $e_{h_{A}}\in g_{1_{A}}$ $%
k_{A}\sqsubseteq g_{2_{A}}$.
\end{definition}

A fuzzy soft regular $T_{1}$- space is called a fuzzy soft $T_{3}$- space,

\begin{definition}
\cite{md}A fuzzy soft topological space $(X,\tau )$ is said to be a fuzzy
soft normal space if for every pair of disjoint fuzzy soft closed sets $%
h_{A} $ and $k_{A}$, $\exists $ disjoint fuzzy soft open sets $g_{1_{A}}$, $%
g_{2_{A}}$ such that $h_{A}\sqsubseteq g_{1_{A}}$ and $k_{A}\sqsubseteq
g_{2_{A}}$.
\end{definition}

A fuzzy soft normal $T_{1}$- space is called a fuzzy soft $T_{4}$- space.

\begin{definition}
Let $(X,{\Large \tau })$ be a fuzzy soft topological space and $%
(e_{1})_{x}^{\alpha }$, $(e_{2})_{y}^{\beta }\in FS(X,E)$. If there exists
fuzzy soft open sets $f_{A}$ and $g_{B}$ such that

(a) When $e_{1}\neq e_{2}$ or $x\neq y$, $f_{A}\in N((e_{1})_{x}^{\alpha })$%
, $f_{A}\overline{q}(e_{2})_{y}^{\beta }$ or $g_{B}\in N((e_{2})_{y}^{\beta
})$, $g_{B}\overline{q}(e_{1})_{x}^{\alpha }$.

(b) When $e_{1}=e_{2}$, $x=y$ and $\alpha <\beta $ (say), $f_{A}\in
N_{q}((e_{2})_{y}^{\beta })$ such that $(e_{1})_{x}^{\alpha }\overline{q}%
f_{A}$.\newline
Then $(X,{\Large \tau })$ is called a fuzzy soft $q$-$T_{0}$ space.
\end{definition}

\begin{theorem}
\label{t0-1}Let $(X,{\Large \tau })$ be a fuzzy soft topological space and $%
(X,{\Large \tau })$ fuzzy soft $q$-$T_{0}$. Then $(X,{\Large \tau })$ is
fuzzy soft $T_{0}$.
\end{theorem}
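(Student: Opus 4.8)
The plan is to convert the point-level, quasi-coincidence separation guaranteed by the $q$-$T_{0}$ axiom into the set-level, ordinary-membership separation demanded by the Mahanta--Das $T_{0}$ axiom. So let $(X,\tau)$ be fuzzy soft $q$-$T_{0}$ and let $e_{h_{A}}$, $e_{g_{B}}$ be disjoint fuzzy soft sets; write $e_{1}$ for the unique parameter with $h_{A}(e_{1})\neq 0_{X}$ and $e_{2}$ for that of $g_{B}$. Pick $x,y\in X$ with $\alpha:=h_{A}(e_{1})(x)>0$ and $\beta:=g_{B}(e_{2})(y)>0$, so that $(e_{1})_{x}^{\alpha}\,\widetilde{\in}\,e_{h_{A}}$ and $(e_{2})_{y}^{\beta}\,\widetilde{\in}\,e_{g_{B}}$. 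A first key remark is that disjointness forbids $e_{1}=e_{2}$ together with $x=y$, since that would force $(e_{h_{A}}\sqcap e_{g_{B}})(e_{1})(x)=\alpha\wedge\beta>0$; hence the pair $(e_{1})_{x}^{\alpha}$, $(e_{2})_{y}^{\beta}$ always falls under clause (a) of the definition of a fuzzy soft $q$-$T_{0}$ space, so the case analysis in that definition collapses and clause (b) plays no role.

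Clause (a), applied to this pair, produces --- after possibly interchanging the two points, which corresponds to interchanging $e_{h_{A}}$ and $e_{g_{B}}$ --- a fuzzy soft open set $F$ with $F\in N((e_{1})_{x}^{\alpha})$ and $F\,\overline{q}\,(e_{2})_{y}^{\beta}$. By the definition of a fuzzy soft neighbourhood there is a fuzzy soft open set $U$ with $(e_{1})_{x}^{\alpha}\sqsubseteq U\sqsubseteq F$; then $(e_{1})_{x}^{\alpha}\,\widetilde{\in}\,U$, while from $U\sqsubseteq F$ and $F\,\overline{q}\,(e_{2})_{y}^{\beta}$ we get $U\,\overline{q}\,(e_{2})_{y}^{\beta}$, equivalently $(e_{2})_{y}^{\beta}\,\widetilde{\in}\,U^{c}$ by Proposition \ref{P-c}(5). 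Thus $U$ is a fuzzy soft open set that contains the witness $(e_{1})_{x}^{\alpha}$ of $e_{h_{A}}$ but separates it from the witness $(e_{2})_{y}^{\beta}$ of $e_{g_{B}}$ --- restated via Proposition \ref{P-c}(1) as $e_{g_{B}}\not\sqsubseteq U$ --- which is exactly the $T_{0}$ requirement; the symmetric subcase of clause (a) is handled identically with the roles of $e_{h_{A}}$ and $e_{g_{B}}$ exchanged, so $(X,\tau)$ is fuzzy soft $T_{0}$.

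The step I expect to be the main obstacle is precisely this last translation between $\overline{q}$ and ordinary membership/inclusion: $U\,\overline{q}\,(e_{2})_{y}^{\beta}$ only asserts $U(e_{2})(y)\le 1-\beta$, whereas ``$U$ does not contain $e_{g_{B}}$'' really wants $U(e_{2})(y)<\beta$ (i.e. $g_{B}(e)(z)>U(e)(z)$ for some $e,z$). To bridge this I plan to be deliberate about the choice of witnessing points --- taking $\alpha$, $\beta$ to be the actual membership values $h_{A}(e_{1})(x)$, $g_{B}(e_{2})(y)$ and, where necessary, choosing $y$ at which $g_{B}(e_{2})$ is largest --- and to lean on Proposition \ref{P-c}(1) and (5) together with $(f_{A}^{c})^{c}=f_{A}$ to transport the separation to complements, in the style of the Ganguly--Saha treatment of the fuzzy $T_{0}$ axiom. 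Everything else in the argument is mechanical unwinding of the definitions of $\sqsubseteq$, $\sqcap$, $q$, $\overline{q}$, $N$ and $N_{q}$.
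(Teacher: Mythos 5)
You have correctly located the crux of the argument, but you have not closed it, and the strategies you sketch for closing it do not work. From $U\,\overline{q}\,(e_{2})_{y}^{\beta}$ you only obtain $U(e_{2})(y)\leq 1-\beta$. When $\beta\leq 1/2$ this is perfectly compatible with $U(e_{2})(y)\geq\beta$, i.e. with $(e_{2})_{y}^{\beta}\,\widetilde{\in}\,U$ and even with $e_{g_{B}}\sqsubseteq U$; so the open set supplied by clause (a) of the $q$-$T_{0}$ definition need not witness the Mahanta--Das $T_{0}$ condition at all. Choosing $y$ where $g_{B}(e_{2})$ is maximal does not help when $\sup_{z}g_{B}(e_{2})(z)\leq 1/2$, and Proposition \ref{P-c}(1),(5) merely restate the inequality $U(e_{2})(y)\leq 1-\beta$ in terms of complements; they cannot convert it into the strict inequality $U(e_{2})(y)<\beta$ that non-containment requires. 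This is exactly where the paper's proof does its real work: it argues contrapositively, assuming that every fuzzy soft open set $h_{C}$ containing one of the two disjoint sets also contains the other (so that $h_{C}(e_{2})(y)\geq\beta$), and then splits on the size of $\beta$. In the easy case $\beta>0.5$ quasi-coincidence is immediate; in the problematic case $\beta\leq 0.5$ the paper enlarges $h_{C}$ to $h_{C}\sqcup(e_{2})_{y}^{\theta}$ with $\theta>1-\beta$, producing a neighbourhood of $(e_{1})_{x}^{\alpha}$ that is forced to be quasi-coincident with $(e_{2})_{y}^{\beta}$, in contradiction with clause (a). Your direct argument has no analogue of this enlargement step, and without one the implication simply does not go through for small $\beta$.

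There is a second, smaller mismatch you should also repair: clause (a) only supplies a neighbourhood of the single fuzzy soft point $(e_{1})_{x}^{\alpha}$, whereas the $T_{0}$ condition you are trying to verify asks for a fuzzy soft open set containing the entire fuzzy soft set $e_{h_{A}}$. Your $U$ satisfies $(e_{1})_{x}^{\alpha}\,\widetilde{\in}\,U$, but nothing in the argument forces $e_{h_{A}}\sqsubseteq U$. The paper's contrapositive formulation sidesteps this by quantifying from the outset over all open sets containing one of the two given sets. Both gaps must be addressed before your direct argument is complete.
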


\begin{proof}
Let $(X,{\Large \tau })$ be a fuzzy soft topological space and $(X,{\Large %
\tau })$ fuzzy soft $q$-$T_{0}$. Suppose that $(X,{\Large \tau })$ is not $%
T_{0}$. Then there exist disjoint fuzzy soft sets $e_{fA}$, $e_{gB}$ such
that for every fuzzy soft open set $h_{C}$ which is containing $e_{fA}$, $%
e_{gB}$ is fuzzy soft subset $h_{C}$. Since $e_{fA}$ and $e_{gB}$ disjoint
fuzzy soft sets, there exists $(e_{1})_{x}^{\alpha }\widetilde{\in }e_{fA}$
and $(e_{2})_{y}^{\beta }\widetilde{\in }e_{gB}$ such that $e_{1}\neq e_{2}$
or $x\neq y$. Since $(e_{1})_{x}^{\alpha }\widetilde{\in }e_{fA}\sqsubset
h_{C}$ and $(e_{2})_{y}^{\beta }\widetilde{\in }e_{gB}\sqsubset h_{C}$, $%
h_{C}\in N((e_{1})_{x}^{\alpha })$ and $\beta \leq h_{C}(y)(e_{2})$. Now;%
\newline
Case I. When $\beta >0,5$, then $h_{C}(y)(e_{2})+\beta >1$. Therefore, we
have $h_{C}q(e_{1})_{x}^{\alpha }$. This is contradiction.\newline
Case II. When $\beta \leq 0,5$, if we choose $\theta >1-\beta $, then $%
h_{C}\sqcup (e_{2})_{y}^{\theta }\in N((e_{1})_{x}^{\alpha })$ and $%
(h_{C}\sqcup (e_{2})_{y}^{\theta })(y)(e_{2})+\beta >1$. Therefore, we have $%
(h_{C}\sqcup (e_{2})_{y}^{\theta })q(e_{2})_{y}^{\beta }$. This is
contradiction.
\end{proof}

\begin{theorem}
$(X,{\Large \tau })$ fuzzy soft topological space is fuzzy soft $q$-$T_{0}$
if and only if for every pair of distinct fuzzy soft points $%
(e_{1})_{x}^{\alpha }$ and $(e_{2})_{y}^{\beta }$, $(e_{1})_{x}^{\alpha }%
\widetilde{\notin }\overline{(e_{2})_{y}^{\beta }}$ or $(e_{2})_{y}^{\beta }%
\widetilde{\notin }\overline{(e_{1})_{x}^{\alpha }}$.
\end{theorem}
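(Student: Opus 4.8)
The engine of the proof is Theorem~\ref{t-cl}: $e_{x}^{\alpha}\,\widetilde{\in}\,\overline{f_{A}}$ precisely when every $Q$-nbd of $e_{x}^{\alpha}$ is soft quasi-coincident with $f_{A}$; dually, $e_{x}^{\alpha}\,\widetilde{\notin}\,\overline{f_{A}}$ holds exactly when some fuzzy soft open set $U$ satisfies $e_{x}^{\alpha}\,q\,U$ and $U\,\overline{q}\,f_{A}$. Read through this lens, the right-hand side of the asserted equivalence becomes: for every pair of distinct fuzzy soft points, some fuzzy soft open set is quasi-coincident with one of them and not quasi-coincident with the other (in one of the two orders). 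Comparing with the definition of fuzzy soft $q$-$T_{0}$, the only discrepancy is that clause~(a) asks for an \emph{ordinary} open neighbourhood of $(e_{1})_{x}^{\alpha}$ (that is, $\alpha\le f_{A}(e_{1})(x)$) not quasi-coincident with $(e_{2})_{y}^{\beta}$, whereas Theorem~\ref{t-cl} speaks of \emph{quasi}-neighbourhoods ($\alpha+U(e_{1})(x)>1$). I would therefore split along the same dichotomy as the definition --- whether the two points share a support --- and in the distinct-support case bridge the ``$\le$'' versus ``$q$-coincident'' gap with the $\tfrac{1}{2}$-threshold device already used in the proof of Theorem~\ref{t0-1}; Proposition~\ref{P-c} is used freely to trade ``$\sqsubseteq$'' for ``$\overline{q}$ of the complement''.

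\emph{Same support} ($e_{1}=e_{2}$, $x=y$, say $\alpha<\beta$). Here $(e_{1})_{x}^{\alpha}\sqsubseteq(e_{2})_{y}^{\beta}\sqsubseteq\overline{(e_{2})_{y}^{\beta}}$, so $(e_{1})_{x}^{\alpha}\,\widetilde{\in}\,\overline{(e_{2})_{y}^{\beta}}$ is automatic and the equivalence collapses to: $q$-$T_{0}$ $\iff$ $(e_{2})_{y}^{\beta}\,\widetilde{\notin}\,\overline{(e_{1})_{x}^{\alpha}}$. If $q$-$T_{0}$ holds, clause~(b) gives a fuzzy soft open $f_{A}$ that is a $Q$-nbd of $(e_{2})_{y}^{\beta}$ with $(e_{1})_{x}^{\alpha}\,\overline{q}\,f_{A}$, and Theorem~\ref{t-cl} at once yields $(e_{2})_{y}^{\beta}\,\widetilde{\notin}\,\overline{(e_{1})_{x}^{\alpha}}$. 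Conversely, from $(e_{2})_{y}^{\beta}\,\widetilde{\notin}\,\overline{(e_{1})_{x}^{\alpha}}$ Theorem~\ref{t-cl} produces a $Q$-nbd $M$ of $(e_{2})_{y}^{\beta}$ with $M\,\overline{q}\,(e_{1})_{x}^{\alpha}$; choosing, as in the definition of $Q$-nbd, a fuzzy soft open $h_{C}$ with $(e_{2})_{y}^{\beta}\,q\,h_{C}\sqsubseteq M$ gives $h_{C}\,\overline{q}\,(e_{1})_{x}^{\alpha}$ since $h_{C}\sqsubseteq M$, and $h_{C}$ is exactly the open set required by clause~(b).

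\emph{Distinct support} ($e_{1}\neq e_{2}$ or $x\neq y$). For the forward direction I would apply the $q$-$T_{0}$ hypothesis not to $(e_{1})_{x}^{\alpha},(e_{2})_{y}^{\beta}$ but to the full points $(e_{1})_{x}^{1},(e_{2})_{y}^{1}$ (still a distinct pair, with the same supports). A fuzzy soft open $f_{A}\in N((e_{1})_{x}^{1})$ with $f_{A}\,\overline{q}\,(e_{2})_{y}^{1}$ is forced to have $f_{A}(e_{1})(x)=1$ and $f_{A}(e_{2})(y)=0$; then $f_{A}^{\,c}$ is fuzzy soft closed with $(e_{2})_{y}^{\beta}\sqsubseteq f_{A}^{\,c}$, hence $\overline{(e_{2})_{y}^{\beta}}\sqsubseteq f_{A}^{\,c}$ and $\overline{(e_{2})_{y}^{\beta}}(e_{1})(x)\le 1-f_{A}(e_{1})(x)=0<\alpha$, i.e.\ $(e_{1})_{x}^{\alpha}\,\widetilde{\notin}\,\overline{(e_{2})_{y}^{\beta}}$ (the mirror alternative in clause~(a) gives the mirror conclusion). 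For the converse, Theorem~\ref{t-cl} hands me a fuzzy soft open $U$ with $(e_{1})_{x}^{\alpha}\,q\,U$ and $U\,\overline{q}\,(e_{2})_{y}^{\beta}$ (or the symmetric pair): if $\alpha\le\tfrac{1}{2}$ the relation $(e_{1})_{x}^{\alpha}\,q\,U$ already forces $\alpha\le U(e_{1})(x)$, so $U$ itself is the ordinary neighbourhood demanded by clause~(a); if $\alpha>\tfrac{1}{2}$ I would re-run the closure hypothesis on the halved point $(e_{1})_{x}^{1-\alpha}$, and symmetrically on $(e_{2})_{y}^{1-\beta}$, to either force $\overline{(e_{2})_{y}^{\beta}}(e_{1})(x)<1-\alpha$ --- so that $\overline{(e_{2})_{y}^{\beta}}^{\,c}$, the largest fuzzy soft open set not quasi-coincident with $(e_{2})_{y}^{\beta}$, already contains $(e_{1})_{x}^{\alpha}$ --- or else to produce the open neighbourhood of $(e_{2})_{y}^{\beta}$ missing $(e_{1})_{x}^{\alpha}$ on the other side.

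The step I expect to be the real obstacle is exactly this last one. The closure operator, through Theorem~\ref{t-cl}, detects only quasi-neighbourhoods, while clause~(a) of the definition is phrased with ordinary neighbourhoods; for grades $\le\tfrac{1}{2}$ a quasi-neighbourhood is automatically an ordinary one and nothing is needed, but for grades $>\tfrac{1}{2}$ the conversion has to be engineered --- by the passage to full or halved fuzzy soft points above, or by the quasi-coincident fattening $h_{C}\sqcup(e)_{w}^{\theta}$ with $\theta>1-$(the relevant grade), as in the proof of Theorem~\ref{t0-1}, performed at the support of the point being separated so that the $\overline{q}$-relation at the other support is left intact. Verifying that the set produced is still a (not necessarily open) neighbourhood of the right point --- and that unions and complements stay inside ${\Large \tau }$ where they must --- is where the care is required; the rest is the routine shuttling between ``$\sqsubseteq$'', ``$\overline{q}$'' and closures supplied by Proposition~\ref{P-c} and Theorem~\ref{t-cl}.
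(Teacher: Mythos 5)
Your forward direction and your treatment of the equal-support case are correct and essentially coincide with the paper's: applying the $q$-$T_{0}$ condition to the full point $(e_{1})_{x}^{1}$ (the paper does exactly this) and then shuttling through Theorem \ref{t-cl} works. The gap is precisely where you say it is --- the converse for distinct supports when a grade exceeds $\tfrac{1}{2}$ --- and your proposed repair does not close it. Suppose $\alpha>\tfrac{1}{2}$ and you apply the hypothesis to the pair $(e_{1})_{x}^{1-\alpha}$, $(e_{2})_{y}^{\beta}$. If the first disjunct fires you are done, as you say. But if the second disjunct fires, Theorem \ref{t-cl} only yields an open $V$ with $V(e_{2})(y)>1-\beta$ and $V(e_{1})(x)\le 1-(1-\alpha)=\alpha$; clause (a) needs either $V(e_{2})(y)\ge\beta$ (an ordinary nbd of $(e_{2})_{y}^{\beta}$) or $V(e_{1})(x)\le 1-\alpha$ (so that $V\,\overline{q}\,(e_{1})_{x}^{\alpha}$), and neither follows when $\alpha,\beta>\tfrac{1}{2}$. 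Halving $\beta$ symmetrically produces the mirror-image deficit, and the two halvings need not combine: the disjunction may resolve in the ``wrong'' direction for each of the four pairs $(e_{1})_{x}^{\gamma},(e_{2})_{y}^{\delta}$ with $\gamma\in\{\alpha,1-\alpha\}$, $\delta\in\{\beta,1-\beta\}$ without ever handing you the inequality you need. (There is also the degenerate case $\alpha=1$, where $1-\alpha=0$ is not an admissible grade for a fuzzy soft point.)

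What is actually invoked at this step in the paper's own proof is that the same disjunct holds \emph{uniformly} as the grade of one of the two points runs over all of $(0,1]$: it writes ``since $(e_{1})_{x}^{\alpha }\widetilde{\notin }\overline{(e_{2})_{y}^{\beta }}$ for all $\alpha \in (0,1]$'' and deduces $\overline{(e_{2})_{y}^{\beta }}(e_{1})(x)=0$, after which $(\overline{(e_{2})_{y}^{\beta }})^{C}$ has value $1$ at $(e_{1},x)$ and is a genuine open nbd of $(e_{1})_{x}^{\alpha }$ not quasi-coincident with $(e_{2})_{y}^{\beta }$. Whether that uniformity is legitimate (the hypothesis grants, for each grade, only one of the two disjuncts, possibly different ones for different grades) is a weakness of the paper's argument as well; but your write-up stops one step short of even that, so the distinct-support half of the converse is not proved. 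To salvage it you must either justify the uniform choice of disjunct or find a different bridge from ``$q$-nbd'' to ``nbd'' at grades above $\tfrac{1}{2}$.
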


\begin{proof}
Let $(X,{\Large \tau })$ be fuzzy soft $q$-$T_{0}$, $(e_{1})_{x}^{\alpha }$
and $(e_{2})_{y}^{\beta }$ be two distinct fuzzy soft points in $FS(X,E)$.

Case I. When $e_{1}\neq e_{2}$ or $x\neq y$, $(e_{1})_{x}^{1}$ has an fuzzy
soft nbd $f_{A}$ such that $f_{A}\overline{q}(e_{2})_{y}^{\beta }$ or $%
(e_{2})_{y}^{1}$ has an fuzzy soft nbd $g_{B}$ such that $g_{B}\overline{q}%
(e_{1})_{x}^{\alpha }$. Suppose $(e_{1})_{x}^{1}$ has an fuzzy soft nbd $%
f_{A}$ such that $f_{A}\overline{q}(e_{2})_{y}^{\beta }$. Then $f_{A}$ is a
fuzzy soft $q$-nbd of $(e_{1})_{x}^{\alpha }$ and $f_{A}\overline{q}%
(e_{2})_{y}^{\beta }$. Hence $(e_{1})_{x}^{\alpha }\widetilde{\notin }%
\overline{(e_{2})_{y}^{\beta }}$.

Case II. When $e_{1}=e_{2}$, $x=y$ and $\alpha <\beta $ (say), then $%
(e_{2})_{y}^{\beta }$ has a \ $q$-nbd which is not quasi coincident with $%
(e_{1})_{x}^{\alpha }$ and so in this case also $(e_{2})_{y}^{\beta }%
\widetilde{\notin }\overline{(e_{1})_{x}^{\alpha }}$.

Conversely, let $(e_{1})_{x}^{\alpha }$ and $(e_{2})_{y}^{\beta }$ be two
distinct fuzzy soft points in $FS(X,E)$. We suppose without loss of
generality, that $(e_{1})_{x}^{\alpha }\widetilde{\notin }\overline{%
(e_{2})_{y}^{\beta }}$. When $e_{1}\neq e_{2}$ or $x\neq y$, since $%
(e_{1})_{x}^{\alpha }\widetilde{\notin }\overline{(e_{2})_{y}^{\beta }}$ for
all $\alpha \in (0,1]$, $(e_{2})_{y}^{\beta }=0$ and hence $(\overline{%
(e_{2})_{y}^{\beta }})^{C}(e_{1})(x))=1$. Then $(\overline{%
(e_{2})_{y}^{\beta }})^{C}$ is an fuzzy soft nbd of $(e_{1})_{x}^{\alpha }$
such that $(\overline{(e_{2})_{y}^{\beta }})^{C}\overline{q}%
(e_{2})_{y}^{\beta }$. Also, in case $e_{1}=e_{2}$ and $x=y$ we\ must have $%
\alpha >\beta $ and\ then $(e_{1})_{x}^{\alpha }$ has a fuzzy soft $q$-nbd
which is not $q$-coincident with $(e_{2})_{y}^{\beta }$.
\end{proof}

\begin{definition}
Let $(X,{\Large \tau })$ be a fuzzy soft topological space and $%
(e_{1})_{x}^{\alpha }$, $(e_{2})_{y}^{\beta }\in FS(X,E)$. If there exists
fuzzy soft open sets $f_{A}$ and $g_{B}$ such that

(a) When $e_{1}\neq e_{2}$ or $x\neq y$, $f_{A}\in N((e_{1})_{x}^{\alpha })$%
, $f_{A}\overline{q}(e_{2})_{y}^{\beta }$ and $g_{B}\in N((e_{2})_{y}^{\beta
})$, $g_{B}\overline{q}(e_{1})_{x}^{\alpha }$.

(b) When $e_{1}=e_{2}$, $x=y$ and $\alpha <\beta $ (say), $f_{A}\in
N_{q}((e_{2})_{y}^{\beta })$ such that $(e_{1})_{x}^{\alpha }\overline{q}%
f_{A}$.\newline

Then $(X,{\Large \tau })$ is called a fuzzy soft $q$-$T_{1}$ space.
\end{definition}

\begin{theorem}
Let $(X,{\Large \tau })$ be a fuzzy soft topological space.and $(X,{\Large %
\tau })$ fuzzy soft, $q$-$T_{1}$. Then $(X,{\Large \tau })$ is fuzzy soft $%
T_{1}$.
\end{theorem}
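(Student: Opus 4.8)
The plan is to mimic the structure of the proof of Theorem \ref{t0-1}, arguing by contradiction and exploiting the fact that a fuzzy soft $q$-$T_1$ space gives us, for a distinct pair of fuzzy soft points, open neighborhoods on \emph{both} sides rather than just one. So, suppose $(X,{\Large \tau})$ is fuzzy soft $q$-$T_1$ but not fuzzy soft $T_1$ in the sense of Mahanta and Das. Then there is a distinct pair of fuzzy soft points $e_{g_A}$, $e_{k_A}$ for which the $T_1$ condition fails; that is, one cannot find fuzzy soft open $s_A$, $h_A$ with $e_{g_A}\in s_A$, $e_{g_A}\notin h_A$, $e_{k_A}\in h_A$, $e_{k_A}\notin s_A$ simultaneously. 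Without loss of generality this failure occurs because every fuzzy soft open set containing $e_{g_A}$ also "contains" (in the appropriate membership sense) $e_{k_A}$, i.e. the separating set on that side cannot be produced.

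Next I would translate the point-data into the $(e_i)_x^\alpha$ notation. Since $e_{g_A}$ and $e_{k_A}$ are distinct fuzzy soft points, pick $(e_1)_x^\alpha\,\widetilde\in\,e_{g_A}$ and $(e_2)_y^\beta\,\widetilde\in\,e_{k_A}$ witnessing the distinctness, so that either $e_1\ne e_2$ or $x\ne y$, or else $e_1=e_2$, $x=y$ with $\alpha\ne\beta$. Apply the definition of fuzzy soft $q$-$T_1$ to this pair. In case (a) we obtain a fuzzy soft open $f_A\in N((e_1)_x^\alpha)$ with $f_A\,\overline q\,(e_2)_y^\beta$ \emph{and} a fuzzy soft open $g_B\in N((e_2)_y^\beta)$ with $g_B\,\overline q\,(e_1)_x^\alpha$. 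The set $f_A$ is the candidate for $s_A$ and $g_B$ for $h_A$: since $f_A$ is a neighborhood of $(e_1)_x^\alpha$ and $f_A\,\overline q\,(e_2)_y^\beta$, by Proposition \ref{P-c}(5) we have $(e_2)_y^\beta\,\widetilde\in\,f_A^c$, i.e. $(e_2)_y^\beta$ does not belong to $f_A$ in the required sense, so $e_{k_A}\notin s_A$; symmetrically $e_{g_A}\notin h_A$. In case (b), where the points share the parameter and support but differ in value (say $\alpha<\beta$), we get a fuzzy soft open $q$-neighborhood $f_A$ of $(e_2)_y^\beta$ with $(e_1)_x^\alpha\,\overline q\,f_A$; one then needs the symmetric statement as well (which the $q$-$T_1$ definition, being the "$q$-$T_0$ plus symmetry" strengthening, should provide, or which follows by swapping roles), and the same Proposition \ref{P-c}(5) translation yields non-membership on both sides. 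In every case we have produced the separating fuzzy soft open sets, contradicting the failure of $T_1$.

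The main obstacle I anticipate is bookkeeping with the membership/quasi-coincidence dictionary at the fuzzy-point level versus at the level of the "monad" fuzzy soft points $e_{g_A}$, $e_{k_A}$ used in Mahanta--Das's definition: one must be careful that "$e_{g_A}\notin h_A$" is the right negation — in the Mahanta--Das setting it presumably means the fuzzy point $g(e)$ is not $\le h(e)$, whereas the $q$-$T_1$ hypothesis gives us the cleaner statement $\,\overline q\,$ which, via Proposition \ref{P-c}(1),(5), converts to $\sqsubseteq$-complement statements. Bridging these precisely — and handling the value-threshold subtlety when $\beta\le 0.5$ exactly as in Cases I and II of Theorem \ref{t0-1}, possibly enlarging $f_A$ by a union $f_A\sqcup(e_2)_y^\theta$ with $\theta>1-\beta$ to force quasi-coincidence where needed — is where the real work lies. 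I would organize the write-up into the same two-case split (by whether the points differ in their parameter/support or only in their value), and in the second case reuse verbatim the $\theta$-enlargement trick from Theorem \ref{t0-1}, the only new ingredient being that $q$-$T_1$ supplies the neighborhood on the second side as well, which is exactly what upgrades the conclusion from $T_0$ to $T_1$.
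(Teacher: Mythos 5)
The paper's own proof of this theorem is literally the single sentence ``The proof is similar with the proof of Theorem \ref{t0-1},'' and your plan --- contradiction, the same two-case split on parameter/support versus value, Proposition \ref{P-c} to translate $\overline{q}$ into membership statements, and the $\theta$-enlargement trick for $\beta\leq 0{,}5$ --- is exactly that argument, with the only new ingredient being that clause (a) of the $q$-$T_{1}$ definition supplies the separating neighborhood on both sides. So you are taking essentially the same approach as the paper, and your explicit flagging of the value-threshold subtlety (that $\widetilde{\in}f_{A}^{c}$ does not by itself give non-membership in $f_{A}$ when $\beta\leq 0{,}5$) is precisely the point at which the paper's own sketch also has to rely on the Theorem \ref{t0-1} machinery.
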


\begin{proof}
The proof is similar with the proof of Theorem \ref{t0-1}.
\end{proof}

\begin{theorem}
$(X,{\Large \tau })$ fuzzy soft topological space is $q$-$T_{1}$ if and only
if each $(e_{1})_{x}^{\alpha }\in FS(X,E)$ is a fuzzy soft closed set.
\end{theorem}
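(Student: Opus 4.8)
The plan is to prove the biconditional in two directions, using the characterization of closure in terms of $q$-neighbourhoods from Theorem \ref{t-cl}, exactly as was done in the proof of the previous theorem characterizing $q$-$T_{0}$ spaces.

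For the forward direction, assume $(X,{\Large \tau})$ is fuzzy soft $q$-$T_{1}$ and fix an arbitrary fuzzy soft point $(e_{1})_{x}^{\alpha}$; I want to show it is closed, i.e. $\overline{(e_{1})_{x}^{\alpha}}\sqsubseteq (e_{1})_{x}^{\alpha}$. It suffices to show that any fuzzy soft point $(e_{2})_{y}^{\beta}$ with $(e_{2})_{y}^{\beta}\widetilde{\in}\overline{(e_{1})_{x}^{\alpha}}$ already satisfies $(e_{2})_{y}^{\beta}\widetilde{\in}(e_{1})_{x}^{\alpha}$. So suppose instead that $(e_{2})_{y}^{\beta}\widetilde{\notin}(e_{1})_{x}^{\alpha}$. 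There are two cases. If $e_{1}\ne e_{2}$ or $x\ne y$, the $q$-$T_{1}$ axiom (part (a)) gives a fuzzy soft open nbd $g_{B}\in N((e_{2})_{y}^{\beta})$ with $g_{B}\overline{q}(e_{1})_{x}^{\alpha}$; since $g_{B}$ is then a $q$-nbd of $(e_{2})_{y}^{\beta}$ that fails to be quasi-coincident with $(e_{1})_{x}^{\alpha}$, Theorem \ref{t-cl} yields $(e_{2})_{y}^{\beta}\widetilde{\notin}\overline{(e_{1})_{x}^{\alpha}}$, a contradiction. If $e_{1}=e_{2}$ and $x=y$ then, since $(e_{2})_{y}^{\beta}\widetilde{\notin}(e_{1})_{x}^{\alpha}$ forces $\beta>\alpha$, part (b) of the axiom (with the roles labelled so that the smaller height is $\alpha$) provides a $q$-nbd of $(e_{2})_{y}^{\beta}$ not quasi-coincident with $(e_{1})_{x}^{\alpha}$, and again Theorem \ref{t-cl} gives the contradiction $(e_{2})_{y}^{\beta}\widetilde{\notin}\overline{(e_{1})_{x}^{\alpha}}$. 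Hence $(e_{1})_{x}^{\alpha}$ is closed.

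For the converse, assume every fuzzy soft point is closed and let $(e_{1})_{x}^{\alpha}$, $(e_{2})_{y}^{\beta}$ be given. When $e_{1}\ne e_{2}$ or $x\ne y$, consider $(e_{2})_{y}^{\beta}$: since it is closed, $\overline{(e_{2})_{y}^{\beta}}=(e_{2})_{y}^{\beta}$, so $(e_{1})_{x}^{\alpha}\widetilde{\notin}\overline{(e_{2})_{y}^{\beta}}$; the complement $\bigl(\overline{(e_{2})_{y}^{\beta}}\bigr)^{c}=\bigl((e_{2})_{y}^{\beta}\bigr)^{c}$ is then a fuzzy soft open set which, by Proposition \ref{P-c}(5), contains $(e_{1})_{x}^{\alpha}$ and satisfies $\bigl((e_{2})_{y}^{\beta}\bigr)^{c}\overline{q}(e_{2})_{y}^{\beta}$ by Proposition \ref{P-c}(3); this gives the first half of part (a), and the symmetric argument applied to the closed point $(e_{1})_{x}^{\alpha}$ gives a fuzzy soft open $g_{B}=\bigl((e_{1})_{x}^{\alpha}\bigr)^{c}\in N((e_{2})_{y}^{\beta})$ with $g_{B}\overline{q}(e_{1})_{x}^{\alpha}$, so part (a) holds in full. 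When $e_{1}=e_{2}$, $x=y$ and $\alpha<\beta$, closedness of $(e_{1})_{x}^{\alpha}$ together with $(e_{2})_{y}^{\beta}\widetilde{\notin}(e_{1})_{x}^{\alpha}$ (which holds since $\beta>\alpha$) gives $(e_{2})_{y}^{\beta}\widetilde{\notin}\overline{(e_{1})_{x}^{\alpha}}$, so by Theorem \ref{t-cl} there is a $q$-nbd $f_{A}$ of $(e_{2})_{y}^{\beta}$ with $f_{A}\overline{q}(e_{1})_{x}^{\alpha}$, which is exactly part (b). Therefore $(X,{\Large \tau})$ is fuzzy soft $q$-$T_{1}$.

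The only genuinely delicate point is the bookkeeping around the height hypotheses in case (b): one must be careful that $(e_{2})_{y}^{\beta}\widetilde{\notin}(e_{1})_{x}^{\alpha}$ really is equivalent to $\beta>\alpha$ when the support and parameter agree, and that the labelling convention ``$\alpha<\beta$ (say)'' in the definition is applied with the correct point playing the role of the lower-height point when invoking part (b). Matching $q$-nbds versus ordinary nbds also needs a word: an open nbd of a point is automatically a $q$-nbd of that point (take $h_{C}$ itself as the witness in the $Q$-nbd definition, using that $g_{B}qg_{B}$ whenever $g_{B}\ne\widetilde{0}_{E}$ holds for a point since $\alpha+\alpha$ can exceed $1$ only after raising height — more simply, every nbd is used here only through Theorem \ref{t-cl}, which is phrased for $q$-nbds, so I will state the observation that the open sets produced are $q$-nbds of the relevant points). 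Everything else is a direct application of Theorem \ref{t-cl} and Proposition \ref{P-c}, mirroring the structure of the preceding proof.
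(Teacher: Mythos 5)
Your converse direction (closed points imply $q$-$T_{1}$) is sound and essentially the paper's argument: the complements $((e_{1})_{x}^{\alpha})^{c}$ and $((e_{2})_{y}^{\beta})^{c}$ do the work in case (a) via Proposition \ref{P-c}(3),(5), and Theorem \ref{t-cl} handles case (b). The forward direction, however, has a genuine gap in case (a). The $q$-$T_{1}$ axiom hands you $g_{B}\in N((e_{2})_{y}^{\beta})$, i.e.\ $(e_{2})_{y}^{\beta}\widetilde{\in }g_{B}$, which only gives $\beta \leq g_{B}(e_{2})(y)$; to invoke Theorem \ref{t-cl} you need $g_{B}$ to be a \emph{$q$-neighbourhood} of $(e_{2})_{y}^{\beta}$, i.e.\ $\beta +g_{B}(e_{2})(y)>1$. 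These are genuinely different when $\beta \leq 0,5$: if $\beta =0,3$ and $g_{B}(e_{2})(y)=0,4$ then $g_{B}$ is an open nbd of $(e_{2})_{y}^{\beta}$ but not a $q$-nbd of it. So the step ``since $g_{B}$ is then a $q$-nbd of $(e_{2})_{y}^{\beta}$'' does not follow, your parenthetical attempt to justify ``every open nbd of a point is a $q$-nbd of that point'' is false as a general claim, and your closing sentence merely re-asserts the needed fact rather than proving it.

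The gap is repairable, and the repair is exactly the device used in the paper's proof of the preceding $q$-$T_{0}$ theorem: apply the separation axiom not to the pair $(e_{1})_{x}^{\alpha}$, $(e_{2})_{y}^{\beta}$ but to $(e_{1})_{x}^{\alpha}$, $(e_{2})_{y}^{1}$. Part (a) then yields an open $g_{B}$ with $(e_{2})_{y}^{1}\widetilde{\in }g_{B}$, hence $g_{B}(e_{2})(y)=1$ and $\beta +g_{B}(e_{2})(y)>1$ for every $\beta >0$, so $g_{B}$ really is an open $q$-nbd of $(e_{2})_{y}^{\beta}$ with $g_{B}\overline{q}(e_{1})_{x}^{\alpha}$, and Theorem \ref{t-cl} finishes the argument. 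With that patch your forward direction becomes a clean direct proof, arguably tidier than the paper's own treatment of this implication, which argues by contradiction and splits on $\beta \leq 0,5$ versus $\beta >0,5$ precisely in order to negotiate the same nbd-versus-$q$-nbd mismatch. Case (b) of your forward direction is fine as written, since there the axiom already delivers a $q$-nbd.
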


\begin{proof}
Suppose that for each $(e_{1})_{x}^{\alpha }\in FS(X,E)$ is a fuzzy soft
closed set. Then $((e_{1})_{x}^{\alpha })^{c}$ is a fuzzy soft open set. Let 
$(e_{1})_{x}^{\alpha }$, $(e_{2})_{y}^{\beta }\in FS(X,E)$.

Case I. When $e_{1}\neq e_{2}$ or $x\neq y$, for $(e_{1})_{x}^{\alpha }\in
FS(X,E)$, $((e_{1})_{x}^{\alpha })^{c}$ is a fuzzy soft open set such that $%
((e_{1})_{x}^{\alpha })^{c}\in N((e_{2})_{y}^{\beta })$ and $%
(e_{1})_{x}^{\alpha }\overline{q}((e_{1})_{x}^{\alpha })^{c}$. Similarly $%
((e_{2})_{y}^{\beta })^{c}$ is a fuzzy soft open set such that $%
((e_{2})_{y}^{\beta })^{c}\in N((e_{1})_{x}^{\alpha })$ and $%
(e_{2})_{y}^{\beta }\overline{q}((e_{2})_{y}^{\beta })^{c}$.

Case II. When $e_{1}=e_{2}$, $x=y$ and $\alpha <\beta $ (say), then $%
(e_{2})_{y}^{\beta }$ has a \ $q$-nbd $((e_{1})_{x}^{\alpha })^{c}$ which is
not quasi coincident with $(e_{1})_{x}^{\alpha }$.Thus $(X,{\Large \tau })$
is a fuzzy soft $T_{1}$ space,

Conversely, Let $(X,{\Large \tau })$ fuzzy soft topological space is $q$-$%
T_{1}$. Suppose that each fuzzy soft point $(e_{1})_{x}^{\alpha }$ is not
fuzzy closed set in ${\Large \tau }$. Then $(e_{1})_{x}^{\alpha }\neq 
\overline{(e_{1})_{x}^{\alpha }}$ and there exists $(e_{2})_{y}^{\beta }%
\widetilde{\in }\overline{(e_{1})_{x}^{\alpha }}$ such that $%
(e_{1})_{x}^{\alpha }\neq (e_{2})_{y}^{\beta }$.

Case I. When $e_{1}\neq e_{2}$ or $x\neq y$, Suppose that $\beta \leq 0,5$.
Since $(e_{2})_{y}^{\beta }\widetilde{\in }\overline{(e_{1})_{x}^{\alpha }}$%
, by theorem \ref{t-cl} for each $f_{A}\in N_{q}((e_{2})_{y}^{\beta })$, $%
f_{A}q(e_{1})_{x}^{\alpha }$. Then there exists fuzzy soft open set $h_{C}$
such that $(e_{2})_{y}^{\beta }qh_{C}\sqsubset f_{A}$. Hence $%
h_{C}(e_{2})(y)+\beta >1$ and $h_{C}(e_{2})(y)>1-\beta $. Since $%
(e_{2})_{y}^{\beta }\sqsubset (e_{2})_{y}^{1-\beta }\sqsubseteq
h_{C}\sqsubset f_{A}$, we have fuzzy soft nbd $f_{A}$ of $(e_{2})_{y}^{\beta
}$ such that $f_{A}q(e_{1})_{x}^{\alpha }$. This is contradiction. If $\beta
>0,5$, we choose $1-\beta $ the proof can be done as above.

Case II. When $e_{1}=e_{2}$, $x=y$ and $\alpha <\beta $ (say), Since $%
(e_{2})_{y}^{\beta }\widetilde{\in }\overline{(e_{1})_{x}^{\alpha }}$, by
theorem \ref{t-cl} for each $f_{A}\in N_{q}((e_{2})_{y}^{\beta })$, $%
f_{A}q(e_{1})_{x}^{\alpha }$. This is contradiction.
\end{proof}

\begin{definition}
Let $(X,{\Large \tau })$ be a fuzzy soft topological space and $%
(e_{1})_{x}^{\alpha }$, $(e_{2})_{y}^{\beta }\in FS(X,E)$. If there exists
fuzzy soft open sets $f_{A}$ and $g_{B}$ such that

(a) When $e_{1}\neq e_{2}$ or $x\neq y$, $f_{A}\in N((e_{1})_{x}^{\alpha })$%
, $g_{B}\in N((e_{2})_{y}^{\beta })$ such that $f_{A}\overline{q}g_{B}$.

(b) When $e_{1}=e_{2}$, $x=y$ and $\alpha <\beta $ (say), $f_{A}\in
N((e_{1})_{x}^{\alpha })$, $g_{B}\in N_{q}((e_{2})_{y}^{\beta })$ such that $%
f_{A}\overline{q}g_{B}$.\newline

Then $(X,{\Large \tau })$ is called a fuzzy soft $q$-$T_{2}$ space.
\end{definition}

\begin{theorem}
Let $(X,{\Large \tau })$ be a fuzzy soft topological space.and $(X,{\Large %
\tau })$ fuzzy soft, $q$-$T_{2}$. Then $(X,{\Large \tau })$ is $T_{2}$.
\end{theorem}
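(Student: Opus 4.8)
The plan is to mimic the structure of the proof of Theorem \ref{t0-1}, arguing by contradiction: assume $(X,{\Large \tau})$ is fuzzy soft $q$-$T_2$ but not (classically, in the sense of Mahanta--Das) fuzzy soft $T_2$. Then there is a distinct pair of fuzzy soft sets $e_{g_A}$, $e_{k_A}$ of some $f_A$ for which no two disjoint fuzzy soft open sets $s_A$, $h_A$ separate them in the required way. From disjointness of $e_{g_A}$ and $e_{k_A}$ I would extract fuzzy soft points $(e_1)_x^\alpha\,\widetilde{\in}\,e_{g_A}$ and $(e_2)_y^\beta\,\widetilde{\in}\,e_{k_A}$ with $e_1\neq e_2$ or $x\neq y$; this is exactly the situation of clause (a) in the definition of $q$-$T_2$.

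Applying $q$-$T_2$ to this pair yields fuzzy soft open sets $f_A\in N((e_1)_x^\alpha)$ and $g_B\in N((e_2)_y^\beta)$ with $f_A\,\overline{q}\,g_B$. By Proposition \ref{P-c}(2), $f_A\,\overline{q}\,g_B$ forces $f_A\sqcap g_B=\widetilde{0}_E$, so $f_A$ and $g_B$ are disjoint fuzzy soft open sets. It then remains to check that they separate $e_{g_A}$ and $e_{k_A}$ in the Mahanta--Das sense, i.e.\ that $e_{g_A}\in f_A$ and $e_{g_A}\notin g_B$ while $e_{k_A}\in g_B$ and $e_{k_A}\notin f_A$ (or the symmetric arrangement). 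Since $f_A$ is a nbd of $(e_1)_x^\alpha\,\widetilde{\in}\,e_{g_A}$, membership of the point in $f_A$ is immediate; the non-membership statements should follow from the quasi-coincidence data $f_A\,\overline{q}\,g_B$ together with Proposition \ref{P-c}(5), $e_x^\alpha\,\overline{q}\,f_A\Leftrightarrow e_x^\alpha\,\widetilde{\in}\,f_A^c$, after choosing the grades appropriately (as in the $\beta>0{,}5$ vs.\ $\beta\le 0{,}5$ split used in Theorem \ref{t0-1}). If one instead only gets $g_B\,\overline{q}\,(e_1)_x^\alpha$ type information, the low-grade case needs the same $(e_2)_y^\theta$ padding trick with $\theta>1-\beta$ to convert "not quasi-coincident" into genuine non-membership of the original point.

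I expect the main obstacle to be bridging the gap between the two very different notions of "fuzzy soft point": the Mahanta--Das objects $e_{g_A}$, $e_{k_A}$ (fuzzy soft sets supported on a single parameter, via Definition from \cite{md}) versus the graded points $(e_1)_x^\alpha$ used throughout Section 3. Making "disjoint" for the former yield an honest pair of graded points with differing parameter or support, and then pushing a separation of the graded points back up to a separation of the original fuzzy soft sets, is where the care is needed; the quasi-coincidence bookkeeping (the $0{,}5$ threshold cases) is routine once that correspondence is pinned down. Since the author explicitly wrote "The proof is similar with the proof of Theorem \ref{t0-1}" for the $q$-$T_1$ analogue, I anticipate the intended proof here is likewise a short reduction, and I would present it in the same two-case format, citing Proposition \ref{P-c} parts (2) and (5) and Theorem \ref{t-cl} as needed.
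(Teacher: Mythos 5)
There is a genuine gap at the central step of your reduction. You write that ``by Proposition \ref{P-c}(2), $f_{A}\,\overline{q}\,g_{B}$ forces $f_{A}\sqcap g_{B}=\widetilde{0}_{E}$,'' but Proposition \ref{P-c}(2) asserts the implication $f_{A}qg_{B}\Rightarrow f_{A}\sqcap g_{B}\neq \widetilde{0}_{E}$, whose contrapositive says only that \emph{disjoint} sets are not quasi-coincident. You are invoking the converse, which is false: if $f_{A}(e)(x)=g_{B}(e)(x)=\tfrac{1}{2}$ for all $e$ and $x$, then $f_{A}(e)(x)+g_{B}(e)(x)=1$, so $f_{A}\,\overline{q}\,g_{B}$, yet $f_{A}\sqcap g_{B}$ is the constant $\tfrac{1}{2}$, far from $\widetilde{0}_{E}$. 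Since the Mahanta--Das definition of fuzzy soft $T_{2}$ demands genuinely disjoint open sets, while the $q$-$T_{2}$ hypothesis only supplies non-quasi-coincident ones, this is exactly the point where the reduction is hardest, and your argument does not close it. (Your plan to use the $\theta>1-\beta$ padding trick and the $0{,}5$ case split addresses the membership/non-membership bookkeeping, not this disjointness issue.)

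To be fair, the paper itself gives no proof here beyond ``similar to Theorem \ref{t0-1},'' and Theorem \ref{t0-1} concerns $T_{0}$, where no disjointness of open sets is required, so the analogy the author appeals to does not obviously cover this point either. But as a self-contained argument your proposal fails at the quoted step: either you must strengthen the conclusion extracted from $q$-$T_{2}$ (e.g.\ shrink $f_{A}$ and $g_{B}$ to disjoint open sets, for which no mechanism is provided), or you must weaken the target notion of $T_{2}$ to one phrased in terms of $\overline{q}$ rather than $\sqcap$. As written, the implication from ``not quasi-coincident'' to ``disjoint'' is the missing idea.
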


\begin{proof}
The proof is similar with the proof of Theorem \ref{t0-1}.
\end{proof}

\begin{remark}
Obviously, Fuzzy soft $q$-$T_{2}\Longrightarrow $Fuzzy soft $q$-$%
T_{1}\Longrightarrow $Fuzzy soft $q$-$T_{0}$.
\end{remark}

\begin{theorem}
$(X,{\Large \tau })$ be a fuzzy soft $q$-$T_{2}$ if and only if for every $%
e_{x}^{\alpha }={\LARGE \sqcap }\{\overline{f_{A}}:f_{A}$ is a fuzzy soft
nbd $e_{x}^{\alpha }\}$.
\end{theorem}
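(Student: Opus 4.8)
The plan is to set $G:={\LARGE \sqcap }\{\overline{f_{A}}:f_{A}\text{ is a fuzzy soft nbd of }e_{x}^{\alpha}\}$ and to split the proof into the two inclusions $e_{x}^{\alpha}\sqsubseteq G$ and $G\sqsubseteq e_{x}^{\alpha}$. The first inclusion holds in any fuzzy soft topological space, since $f_{A}\in N(e_{x}^{\alpha})$ forces $e_{x}^{\alpha}\sqsubseteq f_{A}\sqsubseteq \overline{f_{A}}$, so $e_{x}^{\alpha}$ lies below every term of the meet. Hence the theorem reduces to showing that $(X,\tau)$ is fuzzy soft $q$-$T_{2}$ if and only if $G\sqsubseteq e_{x}^{\alpha}$ for every fuzzy soft point $e_{x}^{\alpha}$; and since every fuzzy soft set is the union of the fuzzy soft points belonging to it, I would restate $G\sqsubseteq e_{x}^{\alpha}$ as: every fuzzy soft point $(e_{2})_{y}^{\beta}\widetilde{\in }G$ already satisfies $(e_{2})_{y}^{\beta}\widetilde{\in }e_{x}^{\alpha}$, i.e. $e_{2}=e$, $y=x$ and $\beta \leq \alpha$.

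For necessity, assume $(X,\tau)$ is $q$-$T_{2}$ and suppose some $(e_{2})_{y}^{\beta}\widetilde{\in }G$ satisfies $(e_{2})_{y}^{\beta}\widetilde{\notin }e_{x}^{\alpha}$. If $e_{2}\neq e$ or $y\neq x$, I would feed the pair $e_{x}^{\alpha}$, $(e_{2})_{y}^{1}$ into clause (a) of the definition of $q$-$T_{2}$, obtaining fuzzy soft open sets $f_{A}\in N(e_{x}^{\alpha})$ and $g_{B}\in N((e_{2})_{y}^{1})$ with $f_{A}\overline{q}g_{B}$; since $g_{B}(e_{2})(y)=1$, $g_{B}$ is a fuzzy soft open $q$-nbd of $(e_{2})_{y}^{\beta}$, whereas $f_{A}\in N(e_{x}^{\alpha})$ gives $\overline{f_{A}}\sqsupseteq G\sqsupseteq (e_{2})_{y}^{\beta}$, so $(e_{2})_{y}^{\beta}\widetilde{\in }\overline{f_{A}}$ and Theorem \ref{t-cl} forces $g_{B}qf_{A}$, contradicting $f_{A}\overline{q}g_{B}$. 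If instead $e_{2}=e$, $y=x$ and $\beta >\alpha$, I would use clause (b) on the pair $e_{x}^{\alpha}$, $e_{x}^{\beta}$ to get $f_{A}\in N(e_{x}^{\alpha})$ and a fuzzy soft open $q$-nbd $g_{B}\in N_{q}(e_{x}^{\beta})$ with $f_{A}\overline{q}g_{B}$, and the same application of Theorem \ref{t-cl} (now with $e_{x}^{\beta}\widetilde{\in }\overline{f_{A}}$) yields the same contradiction. Hence $G=e_{x}^{\alpha}$.

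For sufficiency, assume $e_{x}^{\alpha}={\LARGE \sqcap }\{\overline{f_{A}}:f_{A}\in N(e_{x}^{\alpha})\}$ for every fuzzy soft point and take distinct fuzzy soft points $(e_{1})_{x}^{\alpha}$, $(e_{2})_{y}^{\beta}$. In case $e_{1}\neq e_{2}$ or $x\neq y$ one has $(e_{1})_{x}^{\alpha}(e_{2})(y)=0$, so $\bigwedge_{h_{C}\in N((e_{1})_{x}^{\alpha})}\overline{h_{C}}(e_{2})(y)=0$; I would pick a fuzzy soft nbd $h_{C}$ of $(e_{1})_{x}^{\alpha}$ with $\overline{h_{C}}(e_{2})(y)<1-\beta$, then a fuzzy soft open $f_{A}$ with $(e_{1})_{x}^{\alpha}\sqsubseteq f_{A}\sqsubseteq h_{C}$ (so $\overline{f_{A}}(e_{2})(y)<1-\beta$), and set $g_{B}:=(\overline{f_{A}})^{c}$. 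Then $g_{B}$ is fuzzy soft open, $g_{B}(e_{2})(y)=1-\overline{f_{A}}(e_{2})(y)>\beta$, so $g_{B}\in N((e_{2})_{y}^{\beta})$, and $f_{A}\sqsubseteq \overline{f_{A}}=(g_{B})^{c}$ gives $f_{A}\overline{q}g_{B}$ by Proposition \ref{P-c}(1); this is clause (a). In case $e_{1}=e_{2}=:e$, $x=y$ and $\alpha <\beta$, the same equality gives $\bigwedge_{h_{C}}\overline{h_{C}}(e)(x)=\alpha <\beta$, so I can choose $h_{C}\in N((e_{1})_{x}^{\alpha})$ and then open $f_{A}$ between $(e_{1})_{x}^{\alpha}$ and $h_{C}$ with $\overline{f_{A}}(e)(x)<\beta$; then $g_{B}:=(\overline{f_{A}})^{c}$ is open with $g_{B}(e)(x)>1-\beta$, hence a fuzzy soft open $q$-nbd of $e_{x}^{\beta}$, and $f_{A}\overline{q}g_{B}$ as before, which is clause (b). Thus $(X,\tau)$ is $q$-$T_{2}$.

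The step I expect to be delicate is the traffic between ordinary fuzzy soft neighborhoods and $q$-neighborhoods: Theorem \ref{t-cl} is phrased with $q$-nbds, while the definition of $q$-$T_{2}$ mixes $N$ and $N_{q}$, so in the necessity half one must manufacture a set that is genuinely a $q$-nbd of $(e_{2})_{y}^{\beta}$ (done above by inserting the full-value point $(e_{2})_{y}^{1}$), and in the sufficiency half one must verify that $(\overline{f_{A}})^{c}$ is a bona fide $(q$-$)$neighborhood of $(e_{2})_{y}^{\beta}$, which is exactly where the $1-\beta$ estimate enters. The extreme value $\beta =1$ in clause (a) additionally needs the meet $\bigwedge_{h_{C}}\overline{h_{C}}(e_{2})(y)=0$ to be attained, a minor point handled by a limiting remark in the spirit of the $\beta >0,5$/$\beta \leq 0,5$ split used in the proof of Theorem \ref{t0-1}.
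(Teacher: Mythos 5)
Your proposal is correct and follows essentially the same route as the paper: the inclusion $e_{x}^{\alpha }\sqsubseteq {\LARGE \sqcap }\{\overline{f_{A}}\}$ is automatic, necessity is obtained by promoting the second point to value $1$ so that the separating open set becomes a $q$-nbd of $(e_{2})_{y}^{\beta }$ and Theorem \ref{t-cl} applies, and sufficiency extracts a neighbourhood whose closure misses the other point and separates with $g_{B}=(\overline{f_{A}})^{c}$ (the paper reaches the same open set by running Theorem \ref{t-cl} through an auxiliary level $\lambda $ with $\alpha +\lambda <1$, which is the same $1-\beta $ bookkeeping). The one step you leave vague --- attainment of the infimum when $\alpha =\beta =1$ in clause (a), where one needs a single neighbourhood $f_{A}$ of $(e_{1})_{x}^{1}$ with $\overline{f_{A}}(e_{2})(y)=0$ rather than merely $\inf_{f_{A}}\overline{f_{A}}(e_{2})(y)=0$ --- is precisely the step the paper also asserts without justification, so your flagged "limiting remark" does not close a gap that the source itself closes.
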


\begin{proof}
Let $(X,{\Large \tau })$ be a fuzzy soft $q$-$T_{2}$. $(e_{1})_{x}^{\alpha }$
and $(e_{2})_{y}^{\beta }$ fuzzy soft points in $FS(X,E)$ such that $%
(e_{2})_{y}^{\beta }\widetilde{\neq }(e_{1})_{x}^{\alpha }$. If $e_{1}\neq
e_{2}$ or $x\neq y$, then there are fuzzy soft open sets $f_{A}$ and $g_{B}$
containing $(e_{2})_{y}^{1}$ and $(e_{1})_{x}^{\alpha }$ respectively such
that $f_{A}\overline{q}g_{B}$. Then $g_{B}$ is a fuzzy soft open nbd of $%
(e_{1})_{x}^{\alpha }$ and $f_{A}$ is fuzzy soft $q$-nbd of $%
(e_{2})_{y}^{\beta }$ such that $f_{A}\overline{q}g_{B}$. Hence $%
(e_{2})_{y}^{\beta }\widetilde{\notin }\overline{g_{B}}$. If $e_{1}=e_{2}$, $%
x=y$, then $\beta >\alpha $ and hence there are a fuzzy soft $q$-nbd $f_{A}$
of $(e_{2})_{y}^{\beta }$ and a fuzzy soft nbd $g_{B}$ of $%
(e_{1})_{x}^{\alpha }$ such that $f_{A}\overline{q}g_{B}$. Then $%
(e_{2})_{y}^{\beta }\widetilde{\notin }\overline{g_{B}}$.

Conversely, let $(e_{1})_{x}^{\alpha }$ and $(e_{2})_{y}^{\beta }$ be two
distinct fuzzy soft points in $FS(X,E)$.

Case I. Let $e_{1}\neq e_{2}$ or $x\neq y$. We first suppose that at least
one of $\alpha $ and $\beta $ is less than $1$, say $0<\alpha <1$. The there
exists a positive real number $\lambda $ with $0<\alpha +\lambda <1$. By
hypothesis, there exists a fuzzy soft open nbd $f_{A}$ of $%
(e_{2})_{y}^{\beta }$ such that $(e_{1})_{x}^{\lambda }\widetilde{\notin }%
\overline{f_{A}}$. Then $(e_{1})_{x}^{\lambda }$ has a fuzzy soft open $q$%
-nbd $g_{B}$ such that $g_{B}\overline{q}f_{A}$. Now, $\lambda
+g_{B}(e_{1})(x)>1$ so that $g_{B}(e_{1})(x)>1-\lambda >\alpha $ and hence $%
g_{B}$ is a fuzzy soft nbd of $(e_{1})_{x}^{\alpha }$ such that $f_{A}%
\overline{q}g_{B}$, where $f_{A}$ is a fuzzy soft nbd of $(e_{2})_{y}^{\beta
}$.

In case $\alpha =\beta =1$, by hypothesis there is a fuzzy soft nbd $f_{A}$
of $(e_{1})_{x}^{1}$ such that $\overline{f_{A}}(e_{2})(y)=0$. Then $g_{B}=(%
\overline{f_{A}})^{C}$ is a fuzzy soft nbd of $(e_{2})_{y}^{1}$ such that $%
f_{A}\overline{q}g_{B}$.

Case II. Let $e_{1}=e_{2}$,$\ x=y$ and $\alpha <\beta $ (say), then there is
a fuzzy soft nbd $f_{A}$ of $(e_{1})_{x}^{\alpha }$ such that $%
(e_{2})_{y}^{\beta }\widetilde{\notin }\overline{f_{A}}$. Consequently,
there exists a fuzzy soft $q$- nbd $g_{B}$ of $(e_{2})_{y}^{\beta }$ such
that $f_{A}\overline{q}g_{B}$. Then $(X,{\Large \tau })$ fuzzy soft $q$-$%
T_{2}$.
\end{proof}

\begin{theorem}
Let $(X,{\Large \tau }_{1})$ be a fuzzy soft topological space, $(Y,{\Large %
\tau }_{2})$ be a fuzzy soft $q$-$T_{2\text{ }}$topological space and $%
f_{\varphi \psi }:FS(X,E)\rightarrow FS(Y,K)$ be injective, continuous fuzzy
soft mapping. Then $(X,{\Large \tau }_{1})$ is a fuzzy soft $q$-$T_{2}$
space.
\end{theorem}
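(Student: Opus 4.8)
The plan is to carry the separation data of $(Y,{\Large \tau}_{2})$ back to $(X,{\Large \tau}_{1})$ through the mapping $f_{\varphi\psi}$: injectivity keeps the images of distinct fuzzy soft points distinct (or correctly ordered), while fuzzy soft continuity together with Theorem \ref{q1} guarantees that preimages of separating open sets are again separating open nbds (respectively $q$-nbds).

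First I would take arbitrary fuzzy soft points $(e_{1})_{x}^{\alpha}$ and $(e_{2})_{y}^{\beta}$ of $FS(X,E)$ with $(e_{1})_{x}^{\alpha}\widetilde{\neq}(e_{2})_{y}^{\beta}$, and record the elementary computation $f_{\varphi\psi}((e_{i})_{z}^{\gamma})=(\psi(e_{i}))_{\varphi(z)}^{\gamma}$, read straight off the definition of the image of a fuzzy soft set. Because $\varphi$ and $\psi$ are injective, these images are again fuzzy soft points, with $\psi(e_{1})\neq\psi(e_{2})$ or $\varphi(x)\neq\varphi(y)$ whenever $e_{1}\neq e_{2}$ or $x\neq y$, and with the same value inequality $\alpha<\beta$ over the same support in the remaining case. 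Hence $f_{\varphi\psi}((e_{1})_{x}^{\alpha})$ and $f_{\varphi\psi}((e_{2})_{y}^{\beta})$ are distinct fuzzy soft points of $FS(Y,K)$ to which the fuzzy soft $q$-$T_{2}$ property of $Y$ applies, splitting along exactly the two cases (a) and (b) of the definition.

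In Case (a) ($e_{1}\neq e_{2}$ or $x\neq y$) I would obtain fuzzy soft open sets $f_{A}\in N(f_{\varphi\psi}((e_{1})_{x}^{\alpha}))$ and $g_{B}\in N(f_{\varphi\psi}((e_{2})_{y}^{\beta}))$ with $f_{A}\overline{q}g_{B}$, and then check that $f_{\varphi\psi}^{-1}(f_{A})$ and $f_{\varphi\psi}^{-1}(g_{B})$ do the job in $X$: they are fuzzy soft open by fuzzy soft continuity, they are nbds of $(e_{1})_{x}^{\alpha}$ and $(e_{2})_{y}^{\beta}$ respectively by Theorem \ref{cn}(b) (together with the monotonicity of $f_{\varphi\psi}^{-1}$ from \cite{KA1}), and $f_{\varphi\psi}^{-1}(f_{A})\,\overline{q}\,f_{\varphi\psi}^{-1}(g_{B})$ by Theorem \ref{q1}. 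In Case (b) ($e_{1}=e_{2}$, $x=y$, $\alpha<\beta$) the $q$-$T_{2}$ property gives an open nbd $f_{A}$ of $f_{\varphi\psi}((e_{1})_{x}^{\alpha})$ and an open $q$-nbd $g_{B}$ of $f_{\varphi\psi}((e_{2})_{y}^{\beta})$ with $f_{A}\overline{q}g_{B}$; then $f_{\varphi\psi}^{-1}(f_{A})$ is an open nbd of $(e_{1})_{x}^{\alpha}$ as before, and by Theorem \ref{cq} there is an open $q$-nbd of $(e_{2})_{y}^{\beta}$ contained in $f_{\varphi\psi}^{-1}(g_{B})$, so $f_{\varphi\psi}^{-1}(g_{B})$ is itself an open $q$-nbd of $(e_{2})_{y}^{\beta}$, while Theorem \ref{q1} again yields $f_{\varphi\psi}^{-1}(f_{A})\,\overline{q}\,f_{\varphi\psi}^{-1}(g_{B})$. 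In both cases the pair required in the definition of a fuzzy soft $q$-$T_{2}$ space has been produced, so $(X,{\Large \tau}_{1})$ is fuzzy soft $q$-$T_{2}$.

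I expect the only delicate point to be the handling of $q$-nbds of the image points: one has to be sure that the preimage of a fuzzy soft open $q$-nbd of $f_{\varphi\psi}(e_{x}^{\alpha})$ is again a $q$-nbd of $e_{x}^{\alpha}$, which rests on Theorem \ref{cq} once the identification $f_{\varphi\psi}(e_{x}^{\alpha})=(\psi(e))_{\varphi(x)}^{\alpha}$ is in place (and uses that quasi-coincidence is inherited by supersets), and that non-quasi-coincidence genuinely pulls back, which is precisely Theorem \ref{q1} and does not even need injectivity. Injectivity enters only to make the images of the two starting points distinct in the right sense; everything else is continuity bookkeeping.
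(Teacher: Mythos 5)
Your proposal is correct and follows essentially the same route as the paper: push the two fuzzy soft points forward (using injectivity to keep them distinct in the appropriate case), invoke the $q$-$T_{2}$ property of $(Y,{\Large \tau}_{2})$ to obtain separating open nbds or $q$-nbds, and pull these back via continuity together with Theorems \ref{cq} and \ref{q1}. You are in fact somewhat more explicit than the paper about the computation $f_{\varphi\psi}(e_{x}^{\alpha})=(\psi(e))_{\varphi(x)}^{\alpha}$ and about why the preimage of a $q$-nbd is again a $q$-nbd, but the argument is the same.
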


\begin{proof}
Let $(Y,{\Large \tau }_{2})$ be a fuzzy soft $q$-$T_{2\text{ }}$topological
space and $f_{pu}:FS(X,E)\rightarrow FS(Y,K)$ be injective, continuous fuzzy
soft mapping. Let $(e_{1})_{x}^{\alpha }$, $(e_{2})_{y}^{\beta }\in FS(X,E)$,

Case I. When $e_{1}\neq e_{2}$ or $x\neq y$, then $f_{up}((e_{1})_{x}^{%
\alpha })\neq f_{up}((e_{2})_{y}^{\beta })$. Then $(Y,{\Large \tau }_{2})$
be a fuzzy soft $q$-$T_{2\text{ }}$topological space, $f_{up}((e_{1})_{x}^{%
\alpha })$, $f_{up}((e_{2})_{y}^{\beta })$ have fuzzy soft open nbds $f_{A}$%
, $g_{B}$ respctively such that $f_{A}\overline{q}g_{B}$. Then by teorem \ref%
{cq}, \ref{q1} $f_{up}^{-1}(f_{A})$ and $f_{up}^{-1}(g_{B})$ are fuzzy soft
nbds of $(e_{1})_{x}^{\alpha }$, $(e_{2})_{y}^{\beta }$ respectively such
that $f_{up}^{-1}(f_{A})\overline{q}f_{up}^{-1}(g_{B})$.

Case II. When $e_{1}=e_{2}$, $x=y$ and $\alpha <\beta $ (say), then $%
f_{up}((e_{1})_{x}^{\alpha })\neq f_{up}((e_{2})_{y}^{\beta })$. Then $(Y,%
{\Large \tau }_{2})$ be a fuzzy soft $q$-$T_{2\text{ }}$topological space, $%
f_{A}\in N(f_{up}((e_{1})_{x}^{\alpha }))$, $g_{B}\in
N_{q}(f_{up}((e_{2})_{y}^{\beta }))$ such that $f_{A}\overline{q}g_{B}$.
Then by teorem \ref{cq}, \ref{q1} $f_{up}^{-1}(f_{A})\in
N((e_{1})_{x}^{\alpha })$ and $f_{up}^{-1}(g_{B})\in
N_{q}((e_{2})_{y}^{\beta })$ such that $f_{up}^{-1}(f_{A})\overline{q}%
f_{up}^{-1}(g_{B})$. Then $(X,{\Large \tau }_{1})$ is a fuzzy soft $q$-$%
T_{2} $ space.
\end{proof}

\begin{definition}
A fuzzy soft topological space $(X,{\Large \tau })$ is fuzzy soft $q$%
-regular if and only if for any fuzzy soft closed set $h_{C}$ in $FS(X,E)$
and any fuzzy soft point $e_{x}^{\alpha }$ in $FS(X,E)$ such that $%
e_{x}^{\alpha }\widetilde{\notin }h_{C}$.

Case I. When $h_{C}(e)(x)=0$, there are fuzzy soft open sets $f_{A}$ and $%
g_{B}$ such that $e_{x}^{\alpha }\widetilde{\in }f_{A}$, $h_{C}\sqsubseteq
g_{B}$ and $f_{A}\overline{q}g_{B}$.

Case II. When $h_{C}(e)(x)\neq 0$, there are fuzzy soft open sets $f_{A}$
and $g_{B}$ such that $e_{x}^{\alpha }qf_{A}$, $h_{C}\sqsubseteq g_{B}$ and $%
f_{A}\overline{q}g_{B}$.

A fuzzy soft $q$-$T_{1}$, fuzzy soft $q$-regular fuzzy soft topological
space is a fuzzy soft $q$-$T_{3}$.
\end{definition}

\begin{theorem}
A fuzzy soft topological space $(X,{\Large \tau })$ is fuzzy soft $q$%
-regular if and only if for a fuzzy soft point $e_{x}^{\alpha }$ and any
fuzzy soft open $g_{B}$ in $FS(X,E)$ such that $e_{x}^{\alpha }qg_{B}$ there
is an fuzzy soft open set $\ f_{A}$ such that $e_{x}^{\alpha }qf_{A}$ and $%
\overline{f_{A}}\sqsubseteq g_{B}$.
\end{theorem}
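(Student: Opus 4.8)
The plan is to prove both directions by dualizing through complements, using two standard facts about a fuzzy soft topological space: passing to the complement interchanges fuzzy soft open and fuzzy soft closed sets, and $\overline{f_A}$ is the smallest fuzzy soft closed set containing $f_A$ (so that $f_A\sqsubseteq h$ with $h$ closed forces $\overline{f_A}\sqsubseteq h$). I will also lean on Proposition \ref{P-c}(1), namely $f_A\sqsubseteq g_B\Leftrightarrow f_A\overline{q}g_B^{c}$, to translate freely between inclusion and non-quasi-coincidence, with Theorem \ref{t-cl} entering only implicitly via the closure operator.

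\emph{Necessity.} Assume $(X,\tau)$ is fuzzy soft $q$-regular and suppose $e_x^{\alpha}qg_B$ with $g_B$ fuzzy soft open. First I would record that $h_C:=g_B^{c}$ is fuzzy soft closed and $e_x^{\alpha}\widetilde{\notin}h_C$, since $h_C(e)(x)=1-g_B(e)(x)<\alpha$ precisely because $g_B(e)(x)+\alpha>1$. Now apply $q$-regularity to $h_C$ and $e_x^{\alpha}$. If $h_C(e)(x)\neq 0$, Case II yields fuzzy soft open $f_A,g'$ with $e_x^{\alpha}qf_A$, $h_C\sqsubseteq g'$ and $f_A\overline{q}g'$; if $h_C(e)(x)=0$, I would instead feed the point $e_x^{1}$ (still outside $h_C$) into Case I, obtaining $f_A$ with $e_x^{1}\widetilde{\in}f_A$, hence $f_A(e)(x)=1$ and a fortiori $e_x^{\alpha}qf_A$. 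In either case $f_A\overline{q}g'$ together with Proposition \ref{P-c}(1) gives $f_A\sqsubseteq (g')^{c}$, which is closed, so $\overline{f_A}\sqsubseteq (g')^{c}$; and $h_C\sqsubseteq g'$ gives $(g')^{c}\sqsubseteq h_C^{c}=g_B$. Hence $\overline{f_A}\sqsubseteq g_B$ with $e_x^{\alpha}qf_A$ and $f_A$ fuzzy soft open, as required.

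\emph{Sufficiency.} Assume the stated condition and let $h_C$ be fuzzy soft closed with $e_x^{\alpha}\widetilde{\notin}h_C$, i.e. $h_C(e)(x)<\alpha$. Then $g:=h_C^{c}$ is fuzzy soft open and $e_x^{\alpha}qg$, because $g(e)(x)+\alpha=1-h_C(e)(x)+\alpha>1$. Applying the hypothesis to $e_x^{\alpha}$ and $g$ produces a fuzzy soft open $f_A$ with $e_x^{\alpha}qf_A$ and $\overline{f_A}\sqsubseteq g=h_C^{c}$. Put $g_B:=(\overline{f_A})^{c}$; it is fuzzy soft open, $h_C\sqsubseteq(\overline{f_A})^{c}=g_B$ on complementing $\overline{f_A}\sqsubseteq h_C^{c}$, and $f_A\sqsubseteq\overline{f_A}$ gives $f_A\overline{q}g_B$ by Proposition \ref{P-c}(1). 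This settles Case II of fuzzy soft $q$-regularity. For Case I, where $h_C(e)(x)=0$, I would apply the hypothesis instead to $e_x^{1-\alpha}$ and $g=h_C^{c}$ (legitimate when $\alpha<1$, since $g(e)(x)=1$ forces $e_x^{1-\alpha}qg$), obtaining $f_A$ with $e_x^{1-\alpha}qf_A$, hence $f_A(e)(x)>1-(1-\alpha)=\alpha$ and so $e_x^{\alpha}\widetilde{\in}f_A$; the construction of $g_B$ is unchanged and delivers the Case I requirement $e_x^{\alpha}\widetilde{\in}f_A$, $h_C\sqsubseteq g_B$, $f_A\overline{q}g_B$.

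The topological content — complementation swapping open and closed, monotonicity and minimality of the closure, and Proposition \ref{P-c}(1) — is routine. I expect the genuine care to lie in the bookkeeping with membership grades: the $q$-regularity axiom concludes $e_x^{\alpha}\widetilde{\in}f_A$ or $e_x^{\alpha}qf_A$ according to whether $h_C(e)(x)$ vanishes, whereas the theorem carries a single quasi-coincidence clause, so in each direction one must select the correct test grade ($e_x^{1}$ one way, $e_x^{1-\alpha}$ the other) to line up the two conditions. The crisp extreme $\alpha=1$ in the subcase $h_C(e)(x)=0$ of sufficiency is the one genuinely delicate point and needs a slightly more careful choice of test grade.
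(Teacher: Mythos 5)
Your argument is essentially the paper's own: both directions pass to complements, use Proposition \ref{P-c}(1) to convert $f_{A}\overline{q}h$ into $f_{A}\sqsubseteq h^{c}$, invoke minimality of the closure, and handle the $h_{C}(e)(x)=0$ subcase of necessity by feeding the full-grade point $e_{x}^{1}$ into Case I. Your uniform use of the test grade $1-\alpha$ in the sufficiency Case I is actually cleaner than the paper's split into $\alpha \leq 0{,}5$ and $\alpha >0{,}5$ (where the paper's written step from $e_{x}^{1-\alpha }\widetilde{\in }f_{A}$ to $e_{x}^{\alpha }\widetilde{\in }f_{A}$ does not literally follow, though the intended quasi-coincidence argument is the one you give). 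The corner $\alpha =1$ with $h_{C}(e)(x)=0$ that you flag is genuinely unresolved --- a single application of the hypothesis only ever yields $f_{A}(e)(x)>1-\gamma <1$, never $f_{A}(e)(x)=1$ --- but the paper's proof has exactly the same gap, since its choice of grade $1-\alpha$ degenerates to $0$ there.
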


\begin{proof}
Let $(X,{\Large \tau })$ be fuzzy soft $q$-regular. On the otherhand, a
fuzzy soft point $e_{x}^{\alpha }$ and fuzzy soft open $g_{B}$ in $FS(X,E)$
are given such that $e_{x}^{\alpha }qg_{B}$. Then $g_{B}^{c}$ is fuzzy soft
closed set and $e_{x}^{\alpha }\widetilde{\notin }g_{B}^{c}$.

Case I. When $g_{B}^{c}(e)(x)=0$, since $(X,\tau )$ $q-$regular, there exist
fuzzy soft open $f_{A}$ and $h_{C}$ such that $e_{x}^{1}\widetilde{\in }%
f_{A} $, $g_{B}^{c}\sqsubseteq h_{C}$ and $f_{A}\overline{q}h_{C}$. Then $%
h_{C}^{c}\sqsubseteq g_{B}$ and $f_{A}\sqsubseteq h_{C}^{c}$. Since $%
\overline{f_{A}}$ is smallest fuzzy soft closed set which is containing $%
f_{A}$, we have $f_{A}\sqsubseteq \overline{f_{A}}\sqsubseteq
h_{C}^{c}\sqsubseteq g_{B}$ and $e_{x}^{\alpha }qf_{A}$.

Case II. when $g_{B}^{c}(e)(x)\neq 0$, there exist fuzzy soft open $f_{A}$
and $h_{C}$ such that $e_{x}^{\alpha }qf_{A}$, $g_{B}^{c}\sqsubseteq h_{C}$
and $f_{A}\overline{q}h_{C}$. Since $\overline{f_{A}}$ is smallest fuzzy
soft closed set which is containing $f_{A}$, we have. $f_{A}\sqsubseteq 
\overline{f_{A}}\sqsubseteq h_{C}^{c}\sqsubseteq g_{B}$ and $e_{x}^{\alpha
}qf_{A}$.

Conversely, let any fuzzy soft closed set $h_{C}$ be in $FS(X,E)$ and any
fuzzy soft point $e_{x}^{\alpha }$ be in $FS(X,E)$ such that $e_{x}^{\alpha }%
\widetilde{\notin }h_{C}$. Then $h_{C}^{c}$ is fuzzy soft open and $%
e_{x}^{\alpha }qh_{C}^{c}$.

Case I. When $h_{C}(e)(x)=0$, if $\alpha \leq 0,5$, then there exists fuzzy
soft open$\ f_{A}$ such that $e_{x}^{\alpha }\widetilde{\in }f_{A}$ and $%
\overline{f_{A}}\sqsubseteq h_{C}^{c}$. Therefore, we have $e_{x}^{\alpha }%
\widetilde{\in }f_{A}$, $h_{C}\sqsubseteq (\overline{f_{A})}^{c}$ and $f_{A}%
\overline{q}(\overline{f_{A})}^{c}$. if $\alpha >0,5$, then there exists
fuzzy soft open$\ f_{A}$ such that $e_{x}^{1-\alpha }\widetilde{\in }f_{A}$
and $\overline{f_{A}}\sqsubseteq h_{C}^{c}$. Therefore, we have $%
e_{x}^{\alpha }\widetilde{\in }f_{A}$, $h_{C}\sqsubseteq (\overline{f_{A})}%
^{c}$ and $f_{A}\overline{q}(\overline{f_{A})}^{c}$.

Case II. When $h_{C}(e)(x)\neq 0$, there exists fuzzy soft open $f_{A}$ such
that $e_{x}^{\alpha }qf_{A}$ and $\overline{f_{A}}\sqsubseteq h_{C}^{c}$.
So, we have $e_{x}^{\alpha }qf_{A}$, $h_{C}\sqsubseteq (\overline{f_{A})}^{c}
$ and $f_{A}\overline{q}(\overline{f_{A})}^{c}$. As a result, $(X,{\Large %
\tau })$ is a fuzzy soft $q$-regular.
\end{proof}

\textbf{Compliance with Ethical Standards:}

\textbf{Conflict of Interest: }I (Serkan ATMACA) declare that I have no
conflict of interest. 

\textbf{Ethical approval: }This article does not contain any studies with
human participants or animals performed by me.


\begin{thebibliography}{99}
\bibitem{AA} A. Ayg\"{u}no\u{g}lu, H. Ayg\"{u}n, Introduction to fuzzy soft
groups, Comput. Math. Appl. 58 (2009) 1279-1286.

\bibitem{KA1} A. Kharal, B. Ahmad, Mappings on Fuzzy Soft Classes, Hindawi
Publising Corporation Advances in Fuzzy Systems, Article ID 407890 (2009) 6
pages.

\bibitem{Az} Atmaca, S. and Zorlutuna, \.{I}. \textit{On fuzzy soft
topological spaces}, Ann. Fuzzy Math. Inform. \textbf{5} (2), 377-386, 2013.

\bibitem{Bh} Varol, B. P. and Ayg\"{u}n, H. \textit{Fuzzy soft topology},
Hacet. J. Math. Stat. \textbf{41} (3), 407-419, 2012.

\bibitem{fj} F. Feng, Y.B. Jun, X.Y. Liu, L.F. Li, An adjustable approach to
fuzzy soft set based decision making, J. Comput. Appl. Math. 234 (2010)
10-20.

\bibitem{GS} S. Ganguly and S. Saha, On separation axioms and Ti-fuzzy
continuity, Fuzzy Sets and Systems 16 (1985), 265-275.

\bibitem{zk} Z. Kong, L.Q. Gao, L.F. Wang, Comment on a fuzzy soft set
theoretic approach to decision making problems, J. Comput. Appl. Math. 223
(2009) 540-542.

\bibitem{MB3} P. K. Maji, R. Biswas, A. R. Roy, Fuzzy soft sets, Journal of
Fuzzy Mathematics, vol. 203, no. 2, pp. 589-602, 2001.

\bibitem{md} J. Mahanta, P. K. Das, Results on fuzzy soft topological
spaces, arXiv:1203.0634v1 (2012).

\bibitem{PM} Pu Bao-Ming, Liu Ying-Ming, Fuzzy topology I.Neighbourhood
structure of a fuzzy point and Moore-Smith convergence, J. Math. Anal. Appl.
76 (1980) 571 599.

\bibitem{M} D. Molodtsov, Soft set theory-First results, Comput. Math. Appl.
37 (4/5) (1999), 19-31.

\bibitem{io} Inan, E. and \"{O}zt\"{u}rk,\textit{\ }M. A\textit{. Fuzzy soft
rings and fuzzy soft ideals}, Neural Comput. and Applic DOI
10.1007/s00521-011-0550-5.

\bibitem{arr} A.R. Roy, P.K. Maji, A fuzzy soft set theoretic approach to
decision making problems, J. Comput. Appl. Math. 203 (2007) 412-418.

\bibitem{rs} S. Roy, T.K. Samanta, A note on fuzzy soft topological spaces,
Annals of Fuzzy Mathematics and Informatics, 2011

\bibitem{Ts} \c{S}imsekler, T. and Y\"{u}ksel, S. \textit{Fuzzy soft
topological spaces}, Ann. Fuzzy Math. Inform. \textbf{5} (1), 87-96, 2013.

\bibitem{TK} B. Tanay, M.B. Kandemir, Topological structures of fuzzy soft
sets, Comput. Math. Appl. 61 (2011) 412-418.

\bibitem{x2} Xiao, Z., Gong, K. and Zou, Y. \textit{A combined forecasting
approach based on fuzzy soft sets}, J. Comput. Appl. Math. \textbf{228},
326-333, 2009.

\bibitem{Z} L. A. Zadeh, Fuzzy sets, Inform. and Control, 8 (1965), 338-353.
\end{thebibliography}
\end{document}